\documentclass[11pt,reqno]{amsart}
\usepackage{amsthm, amsmath, amssymb, amscd, latexsym, multicol, verbatim, enumerate, }
\usepackage[usenames]{color}
\usepackage{hyperref}
\advance\textwidth by 1.2in \advance\oddsidemargin by -.6in \advance\evensidemargin by -.6in \parskip=.1cm
\theoremstyle{definition} 
\newtheorem{cor}{Corollary}
\newtheorem{lem}{Lemma}
\newtheorem{prop}{Proposition}
\newtheorem{thm}{Theorem}
\newtheorem{rem}{Remark}
\newtheorem{defn}{Definition}
 \newtheorem*{thm*}{Theorem}
\newtheorem*{coro*}{Corollary}

\newtheorem{exam}{Example}

\newcounter{cnt}
 \makeatletter
\def\mydggeometry{\makeatletter\dg@YGRID=1\dg@XGRID=20\unitlength=0.003pt\makeatother}
\makeatother \theoremstyle{remark}
\numberwithin{equation}{section}
\newcommand{\nc}{\newcommand}
\nc{\lie}[1]{\mathfrak{#1}} \nc\bp{\mathbf{p}} \nc\bq{\mathbf{q}} \nc\mD{\mathbb{D}} \nc\bs{\mathbf{s}} \nc\bt{\mathbf{t}} \nc\bz{\mathbb{Z}} \nc\bc{\mathbb C} 

\begin{document}
\author[Fourier]{Ghislain Fourier}
\address{Mathematisches Institut, Universit\"at Bonn}
\address{School of Mathematics and Statistics, University of Glasgow}
\email{ghislain.fourier@glasgow.ac.uk}

\thanks{DFG priority program 1388}

\date{\today}

\title[PBW graded Demazure for triangular elements]{PBW-degenerated Demazure modules and Schubert varieties for triangular elements}
\begin{abstract}
We study certain faces of the normal polytope introduced by Feigin, Littelmann and the author whose lattice points parametrize a monomial basis of the PBW-degenerated of simple modules for $\mathfrak{sl}_{n+1}$. We show that lattice points in these faces parametrize monomial bases of PBW-degenerated Demazure modules associated to Weyl group elements satisfying a certain closure property, for example Kempf elements.\\
These faces are again normal polytopes and their Minkowski sum is compatible with tensor products, which implies that we obtain flat degenerations of the corresponding Schubert varieties to PBW degenerated and toric varieties.
\end{abstract}
\maketitle

\section*{Introduction}
Let $\lie n$ be a finite-dimensional complex Lie algebra, then the PBW filtration on $U(\lie n)$ is defined
\[
U(\lie n)_s = \langle x_{i_1} \cdots x_{i_\ell} \; | \; x_{i_j} \in \lie n \, ; \, \ell \leq s \rangle_{\bc}.
\]
One has an induced filtration on any cyclic $\lie n$-module $M$. The associated graded module $M^a$ is a module for the abelianized Lie algebra $\lie n^a$ and the commutative algebra $S(\lie n)$. Let us consider the special case where $\lie g$ is a simple complex Lie algebra with triangular decomposition $\lie g = \lie n^+ \oplus \lie h \oplus  \lie n$ and $M = V(\lambda) = U(\lie n).v_\lambda$, a simple finite-dimensional highest weight module for $\lie g$. Then $V(\lambda)^a$ is a cyclic module for the deformed Lie algebra $\lie b \oplus \lie n^a$.\\
These graded modules $V(\lambda)^a$ have been studied under various aspects quite a lot in recent years \cite{Fei12, G11, FFoL11a, FFoL11b, FFoL13, FFoL13a, CF13, Fou14, BD14}. For example in \cite{FFoL11a} the annihilating ideal for $V(\lambda)^a$ as a $S(\lie n)$-module has been computed in the case $\lie{sl}_{n+1}$.  Moreover a normal polytope $P(\lambda)$ parametrizing a monomial bases of $V(\lambda)^a$ has been provided, e.g. if $N$ is the number of positive roots, $R^+$, then $P(\lambda) \subset \mathbb{R}_{\geq 0}^{N}$ and the inequalities defining this polytope can be described by using Dyck paths. The same has been done for the symplectic Lie algebra in \cite{FFoL11b} and in \cite{G11, BD14} in various other cases. The Dyck paths in our context: descending sequences of positive roots, with respect to the partial order $\leq$, introduce in \cite{FFoL11a}, see \eqref{ord-root1}. \\
In this paper we will extend the study of PBW filtrations to Demazure modules for $\lie {sl}_{n+1}$, e.g. submodules generated by the action of the Borel subalgebra through an extremal weight vector $V_w(\lambda) := U(\lie b).v_{w \lambda} \subset V(\lambda)$. A first step into this study has been taken in \cite{CF13}, where the maximal non-zero degree of such a PBW graded Demazure module has been computed. The maximal PBW degree of $V(\lambda)^a$ has been computed for all types of simple finite-dimensional Lie algebras in \cite{BBCF14}. Associated graded affine Demazure modules or Weyl modules have been studied in \cite{CO13, FM14}, while we will focus on Demazure modules for finite-dimensional Lie algebras.\\
For our study we will consider the following faces of the polytope $P(\lambda)$: Let $A \subset R^+$, a subset of the positive roots. We study the face $P_A(\lambda)$ of $P(\lambda)$ defined by setting all coordinates $s_\alpha = 0$ if $\alpha \notin A$, and denote $S_A(\lambda)$ the set of lattice points in $P_A(\lambda)$.\\
We introduce \textit{triangular} subsets $A \subset R^+$: \\
Let $\beta_1, \beta_2 \in R^+$ and let $\gamma(\beta_1, \beta_2)$ be the minimal positive root, such that $\gamma(\beta_1, \beta_2) \trianglerighteq \beta_1, \beta_2$ (here $\trianglelefteq$ is the usual order on positive roots: $\beta_1 \trianglelefteq \beta_2 \Leftrightarrow \beta_2 - \beta_1$ is a positive root). Then $A$ is \textit{triangular} if for all $\beta_1, \beta_2 \in A$ such that $\operatorname{supp } \beta_1 \cup \operatorname{supp }\beta_2$ is connected: 
\[\gamma(\beta_1, \beta_2), \beta_1 + \beta_2 - \gamma(\beta_1, \beta_2) \in A.\]
Further we say a Weyl group element $w \in S_{n+1}$ is called \textit{triangular}  if  $w^{-1}(R^-) \cap R^+$ is triangular. Kempf elements are a proper subset of the set of triangular elements (Lemma~\ref{kempf-prop}, Remark~\ref{kempf-rem}), for other interesting examples see Remark~\ref{exam-tri}.\\
Let $A \subset R^+$ and denote  $\lie n_A \subset \lie n$ the Lie subalgebra generated by the negative root vectors of roots in $A$, further set $V_A(\lambda) := U(\lie n_A).v_\lambda \subset V(\lambda)$.
\begin{thm*} Let $A \subset R^+$ be triangular, then:
\begin{enumerate}
\item $P_A(\lambda)$ is a normal polytope.
\item For all $\lambda, \mu \in P^+$: $S_A(\lambda) + S_A(\mu) = S_A(\lambda + \mu)$.
\item $S_A(\lambda)$ parametrizes a monomial basis of the PBW graded module $(V_A(\lambda))^a$.
\item Suppose $A = w^{-1}(R^-) \cap R^+$ for some $w \in S_{n+1}$, then $S_A(\lambda)$ parametrizes a monomial basis of the PBW graded Demazure module $V_w(\lambda)^a := (U(\lie b).v_{w\lambda})^a$.
\end{enumerate}
\end{thm*}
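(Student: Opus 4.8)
The plan is to reduce (1) and (2) to the corresponding facts about $P(\lambda)$ from \cite{FFoL11a}, to establish (3) by transporting the monomial basis of $V(\lambda)^a$ to the subalgebra $\lie n_A$ --- triangularity of $A$ being exactly the hypothesis that makes this work --- and to deduce (4) from (3) by letting a lift of $w$ carry $V_A(\lambda)$ onto $V_w(\lambda)$. For (1): since $P(\lambda)\subset\mathbb{R}_{\geq 0}^{N}$ every hyperplane $\{s_\alpha=0\}$ supports $P(\lambda)$, so $P_A(\lambda)$ is genuinely a face of $P(\lambda)$, and a face of a normal lattice polytope is normal: if $x\in kP_A(\lambda)\cap\bz^{N}$, write $x=x_1+\dots+x_k$ with $x_i\in P(\lambda)\cap\bz^{N}$ by normality of $P(\lambda)$, and nonnegativity of the coordinates forces each $x_i$ to be supported on $A$, hence $x_i\in P_A(\lambda)$. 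Part (2) is the same bookkeeping, writing $S_A(\nu)=S(\nu)\cap\{s:\operatorname{supp}s\subseteq A\}$: the inclusion $\subseteq$ is immediate from $S(\lambda)+S(\mu)=S(\lambda+\mu)$, and conversely any $u\in S_A(\lambda+\mu)$ splits as $u=s+t$ with $s\in S(\lambda),\,t\in S(\mu)$, whereupon nonnegativity forces $\operatorname{supp}s,\operatorname{supp}t\subseteq A$. (One also wants the explicit description of $P_A(\lambda)$ by the Dyck-path inequalities of \cite{FFoL11a} supported on $A$; that does use triangularity, but it is not needed for (1)--(2) as stated.)

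For (3), triangularity first forces $\lie n_A=\bigoplus_{\gamma\in A}\bc f_\gamma$: if $\beta_1,\beta_2\in A$ and $\beta_1+\beta_2\in R^+$, then for $\lie{sl}_{n+1}$ their supports are adjacent intervals, so $\operatorname{supp}\beta_1\cup\operatorname{supp}\beta_2$ is connected and $\gamma(\beta_1,\beta_2)=\beta_1+\beta_2$; thus $\beta_1+\beta_2\in A$ and $[f_{\beta_1},f_{\beta_2}]\in\bc f_{\beta_1+\beta_2}$ never leaves the span of the $f_\gamma$ with $\gamma\in A$. Writing $f^s=\prod_\gamma f_\gamma^{s_\gamma}$, it follows that $(V_A(\lambda))^a$ is a graded quotient of $\bc[f_\gamma:\gamma\in A]/I_A(\lambda)$, where $I_A(\lambda)$ is the ideal generated by those Dyck-path relations of \cite{FFoL11a} that involve only the $f_\gamma$ with $\gamma\in A$ (these relations hold in $V(\lambda)^a$, hence vanish on $v_\lambda$ and descend to the associated graded). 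I would then show, by running the proof of \cite{FFoL11a} inside $A$, that the standard monomials of $I_A(\lambda)$ for the term order used there are precisely $\{f^s:s\in S_A(\lambda)\}$, which gives $\dim(V_A(\lambda))^a\leq|S_A(\lambda)|$. For the reverse inequality, the vectors $f^s.v_\lambda$ with $s\in S_A(\lambda)\subseteq S(\lambda)$ lie in $V_A(\lambda)$ and form a sub-family of the monomial basis of $V(\lambda)$ from \cite{FFoL11a}, hence are linearly independent, so $\dim(V_A(\lambda))^a=\dim V_A(\lambda)\geq|S_A(\lambda)|$; equality then forces the spanning family $\{f^s.v_\lambda:s\in S_A(\lambda)\}$ to be a monomial basis of $(V_A(\lambda))^a$.

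The main obstacle, I expect, is the claim that the standard monomials of $I_A(\lambda)$ are exactly $S_A(\lambda)$. The rewriting rules of \cite{FFoL11a} are assembled from the quadratic relations, which replace a product $f_{\beta_1}f_{\beta_2}$ (for $\beta_1,\beta_2$ with connected supports) by $f_{\gamma(\beta_1,\beta_2)}\,f_{\beta_1+\beta_2-\gamma(\beta_1,\beta_2)}$ up to lower-order monomials, together with the longer relations attached to Dyck paths; for $\lie{sl}_{n+1}$, apart from $\{\beta_1,\beta_2\}$ itself, the multiset $\{\gamma(\beta_1,\beta_2),\ \beta_1+\beta_2-\gamma(\beta_1,\beta_2)\}$ is the only pair of roots with sum $\beta_1+\beta_2$, so the definition of ``triangular'' says exactly that these quadratic moves do not leave $A$. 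The work is to upgrade this to: a monomial supported on $A$ that violates some Dyck-path inequality already violates one coming from a Dyck path lying in $A$, so that its straightening proceeds among monomials supported on $A$ and terminates in $S_A(\lambda)$. Concretely, one must show that the part of any Dyck path meeting $A$ is covered --- with a right-hand side no larger --- by Dyck paths contained in $A$; this is where the connectedness hypothesis in the definition of triangularity is used, and the same lemma supplies the explicit inequality description of $P_A(\lambda)$ above.

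For (4), put $A=w^{-1}(R^-)\cap R^+$ and fix a representative $\dot w$ of $w$ on $V(\lambda)$ with $\dot w.v_\lambda=v_{w\lambda}$. Conjugation by $\dot w$ sends the root vector $f_\gamma$ (of weight $-\gamma$) to a nonzero multiple of the root vector of weight $-w\gamma$; for $\gamma\in A$ one has $w\gamma\in R^-$, hence $-w\gamma\in R^+$, and $\{-w\gamma:\gamma\in A\}=\{\alpha\in R^+:w^{-1}\alpha\in R^-\}$ is the root set of the nilpotent subalgebra $\lie m$ spanned by the corresponding raising operators. Since $v_{w\lambda}$ is a weight vector and every positive root vector outside $\lie m$ annihilates it for weight reasons, $V_w(\lambda)=U(\lie b).v_{w\lambda}=U(\lie n^+).v_{w\lambda}=U(\lie m).v_{w\lambda}$; hence $\dot w$ carries $V_A(\lambda)=U(\lie n_A).v_\lambda$ isomorphically onto $V_w(\lambda)$ and preserves the enveloping-algebra filtration. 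Passing to associated graded modules, the monomial basis $\{f^s.v_\lambda:s\in S_A(\lambda)\}$ of $(V_A(\lambda))^a$ from (3) is carried to a monomial basis of $V_w(\lambda)^a$, now in the raising operators applied to $v_{w\lambda}$ and still indexed by $S_A(\lambda)$. Since $A=w^{-1}(R^-)\cap R^+$ is triangular by hypothesis --- e.g.\ for Kempf elements, by Lemma~\ref{kempf-prop} --- this proves (4).
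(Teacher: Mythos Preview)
Your proposal is correct and follows essentially the same route as the paper: identify $P_A(\lambda)$ with a face of $P(\lambda)$ to get (1)--(2) (Proposition~\ref{prop-embed}, Lemma~\ref{mink-sum}), prove (3) by running the straightening of \cite{FFoL11a} inside $A$ for spanning --- triangularity being exactly what keeps the rewriting supported on $A$, as in Lemma~\ref{straight} --- and by embedding into the basis of $V(\lambda)^a$ for independence (Corollary~\ref{linset}), then conjugate by $w$ as in \eqref{tri=dem} for (4). Your side remark that (1)--(2) hold for arbitrary $A$ once $P_A(\lambda)$ is taken to mean the face is correct; the paper's Proposition~\ref{prop-embed} invokes triangularity only because $P_A(\lambda)$ is defined in Section~\ref{sec-two} via Dyck paths in $\mathbb D_A$, and one must check this agrees with the face.
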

We should remark here, that if $w$ is not triangular, then the lattice points in $S_A(\lambda)$ do not parametrize a basis of $V_w(\lambda)^a$. In fact, we still obtain a linear independent subset but this is not a spanning set of $V_w(\lambda)$ (if $\lambda$ is regular). Hence we can not generalizes further, since the conditions are necessary if we want to consider faces of $P(\lambda)$.\\ 
The last point provides a non-recursive character formula for Demazure modules (for triangular elements) in terms of the lattice points of the face of $P(\lambda)$ (Corollary~\ref{dem-char}). \\
Another approach, for arbitrary Weyl group elements but with restriction to multiples of fundamental weights has been provided in \cite{BF14} using another polytope description similar to one provided by \cite{BD14}. It is proved in \cite{BF14} that the there is a marked poset (see \cite{Sta86, ABS11}) such that the polytope describing a monomial basis of the PBW graded Demazure module is a face of the marked chain polytope while the marked order polytope is a Kogan face of the well-known Gelfand-Tsetlin polytope (\cite{KST12}, \cite{Ko00}). It would be interesting to explore this connection in our context of arbitrary highest weights, this will be part of future research.\\
The notion of a favourable module $M$ for a unipotent complex algebraic group $\mathbb{U}$ has been introduced in \cite{FFoL13a}. Certain interesting properties such as flat toric degenerations of the associated flag varieties (see for more details Section~\ref{sec-five} or \cite{FFoL13a}) of a favourable module are governed by combinatorial properties of a convex polytope. A cyclic module $M$ is called favourable if there is a polytope whose lattice points $S(M)$ parametrize a basis of $\operatorname{gr} M$, where the grading is induced from a total ordering on a basis of $\lie u$ (the Lie algebra associated with $\mathbb{U}$) and an induced homogeneous lexicographic ordering on monomials in this basis. Further, the lattice points in the $n$-th dilation of $S(M)$ (the $n$-th Minkowski sum) parametrize a basis of the Cartan component of the $n$-times tensor product of $M$.
\begin{coro*} Let $\lambda \in P^+$, $ \subset R^+$ triangular, then $V_A(\lambda)$ is a favourable module. Further, if there exists $w \in W$ such that  $A = w^{-1}(R^-) \cap R^+$, then $V_w(\lambda)$ is a favourable module.
\end{coro*}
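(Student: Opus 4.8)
The plan is to verify, directly from the four parts of the Theorem, the defining properties of a favourable module from \cite{FFoL13a}. I take first $M=V_A(\lambda)$, viewed as a cyclic module over $U(\lie n_A)$ with cyclic vector $v_\lambda$, and fix on $\lie n_A$ the ordered basis of negative root vectors together with the induced homogeneous lexicographic order on monomials used in \cite{FFoL11a}, with the associated filtration it induces on $M$. One has to show: (a) the lattice points of $P_A(\lambda)$ are exactly the exponents $\bs$ for which $f^{\bs}v_\lambda$ is a new basis vector of $\operatorname{gr}M$, and $P_A(\lambda)$ is normal; (b) for every $n\ge 1$ one has $S(M^{(n)})=S_A(\lambda)+\dots+S_A(\lambda)$ with $n$ summands, where $M^{(n)}\subset M^{\otimes n}$ denotes the submodule generated by $v_\lambda^{\otimes n}$. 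Normality in (a) is exactly part (1) of the Theorem, and part (2) feeds directly into (b); so the genuine content is the identification of the essential exponents and the identification of $M^{(n)}$.

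For the essential exponents I would start from part (3), which already gives that $\{f^{\bs}v_\lambda:\bs\in S_A(\lambda)\}$ is a basis of the PBW-graded module $V_A(\lambda)^a$, i.e.\ of $\operatorname{gr}V_A(\lambda)$ for the coarser filtration by total degree only, and hence a basis of $V_A(\lambda)$ itself. Refining this to the homogeneous lexicographic filtration amounts to showing that $S_A(\lambda)$ equals the essential set $S(V_A(\lambda))$; since both have cardinality $\dim V_A(\lambda)$, it suffices to prove $S_A(\lambda)\subseteq S(V_A(\lambda))$. Here one uses the construction of the monomial basis in \cite{FFoL11a}: for $\bs\in S_A(\lambda)$ the vector $f^{\bs}v_\lambda$ is nonzero of PBW-degree $|\bs|$, so its leading symbol for the homogeneous lexicographic filtration sits in that degree, and straightening the non-basis monomials of degree $|\bs|$ that are lexicographically below $\bs$ --- using the defining Dyck-path relations --- only produces basis monomials that are again lexicographically below $\bs$; hence $f^{\bs}v_\lambda\notin F_{\prec\bs}V_A(\lambda)$. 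I expect this compatibility of the \cite{FFoL11a} monomial basis with the homogeneous lexicographic order to be the main obstacle; once it is in hand, the rest is formal.

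The identification of $M^{(n)}$ comes from the Cartan embedding: the $\lie g$-equivariant inclusion $V(n\lambda)\hookrightarrow V(\lambda)^{\otimes n}$ sends $v_{n\lambda}$ to $v_\lambda^{\otimes n}$, so restricting it to $U(\lie n_A)$ identifies $V_A(n\lambda)=U(\lie n_A).v_{n\lambda}$ with $M^{(n)}=U(\lie n_A).v_\lambda^{\otimes n}$, compatibly with cyclic vectors and the $\lie n_A$-action, hence with the filtrations. Thus $S(M^{(n)})=S(V_A(n\lambda))=S_A(n\lambda)$ by the previous paragraph applied to $n\lambda$, while $S_A(n\lambda)=S_A(\lambda)+\dots+S_A(\lambda)$ by part (2), which is (b); so $V_A(\lambda)$ is favourable. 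Finally, if $A=w^{-1}(R^-)\cap R^+$ with $w$ triangular, the three steps repeat with $M=V_w(\lambda)=U(\lie n^+).v_{w\lambda}$, $\lie u=\lie n^+$, and part (4) in place of part (3) --- the extra care needed being the reindexing of the monomial basis along the natural bijection between $A$ and the set of positive roots $\gamma$ with $e_\gamma v_{w\lambda}\ne 0$, and the identification $M^{(n)}\cong V_w(n\lambda)$: restricting the Cartan embedding $V(n\lambda)\hookrightarrow V(\lambda)^{\otimes n}$ to $\lie n^+$ works because $v_{w(n\lambda)}$ maps to a nonzero multiple of $v_{w\lambda}^{\otimes n}$, the $n(w\lambda)$-weight space of $V(\lambda)^{\otimes n}$ being one-dimensional since weight multiplicities are Weyl-invariant and $n\lambda$ occurs with multiplicity one. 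Then normality (part (1)) and the Minkowski-sum identity (part (2)) give that $V_w(\lambda)$ is favourable as well.
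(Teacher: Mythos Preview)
Your argument is correct, and the overall architecture---identify $S_A(\lambda)$ with the essential set, then use the Cartan embedding together with the Minkowski-sum identity---matches the paper. The genuine difference lies in how you establish $S_A(\lambda)=es(V_A(\lambda))$. You work intrinsically in $V_A(\lambda)$ with the ordering from \cite{FFoL11a}: the straightening lemma (Lemma~\ref{straight}) rewrites any non-basis monomial $f^{\bt}v_\lambda$ with $\bt\notin S_A(\lambda)$ as a combination of strictly $\prec$-smaller terms, so for $\bs\in S_A(\lambda)$ the subspace $\langle f^{\bt}v_\lambda:\bt\prec\bs\rangle$ is spanned by $\{f^{\bt}v_\lambda:\bt\in S_A(\lambda),\ \bt\prec\bs\}$, and linear independence of the basis forces $f^{\bs}v_\lambda$ to lie outside it. The paper takes a different route: it switches to the ordering of \cite{FFoL13a} (explicitly noting it differs from the one used in the straightening), invokes the black-box result $S(\lambda)=es(V(\lambda))$ for the full module, and then observes that essentiality in $V(\lambda)$ restricts to essentiality in the submodule $V_A(\lambda)$ via the inclusion of filtrations; a dimension count gives equality. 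Your approach is more self-contained and avoids the change of ordering, at the cost of rederiving what \cite{FFoL13a} already packages; the paper's approach is shorter once one is willing to import that package. For the Demazure case you redo the argument with $\lie n^+$ acting on $v_{w\lambda}$; the paper instead transfers favourability along the conjugation $V_w(\lambda)=w\,V_A(\lambda)\,w^{-1}$ of \eqref{tri=dem}, which is quicker but amounts to the same reindexing you describe.
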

Thus we obtain for all triangular $w \in W$ a flat degeneration of the Schubert variety $X_w = \overline{B.[v_{w(\lambda)}]} \subset \mathbb{P}(V_w(\lambda))$ (and especially of the Kempf variety) into a PBW degenerated and further into a toric variety. These varieties are projectively normal and arithmetically Cohen-Macaulay varieties.  Note that it has been shown in \cite{FFoL13a} (for the special case $w = w_0$) that this toric variety is, in general, not isomorphic to the toric varieties constructed in \cite{GL96}. One can easily verify using the program \textit{polymake} (\cite{GJ00}) that the associated normal fans have different numbers of lower dimensional faces.\\

The paper is organized as follows: In Section~\ref{sec-one} we give basic definitions and introduce the terms triangular subset and triangular Weyl group element, show that Kempf elements are triangular. In Section~\ref{sec-two} we define the polytopes $P_A(\lambda)$ and provide a connection to marked posets. Section~\ref{sec-three} is about the representation theory, we recall the PBW graded modules and state the main results of the paper, while the proofs are in Section~\ref{sec-four}. Section~\ref{sec-five} is dedicated to the applications to degenerated Schubert varieties.\\

\noindent
\textbf{Acknowledgments} The author is supported by the project ''Shuffles and Schur positivity'' within DFG priority program 1388 ''Representation Theory''. 
\section{Definitions and basics}\label{sec-one}
\subsection{Preliminaries}
Let $\lie g = \lie{sl}_{n+1}$ with the standard triangular decomposition 
\[
\lie g = \lie b^+ \oplus \lie n^- = \lie n^+ \oplus \lie h \oplus \lie n^-.
\]
We denote the set of roots $R$, the set of positive roots $R^+$. Simple roots are denoted $\alpha_i, i = 1 \ldots, n$, and so we can write any positive root as 
\[\alpha_{i,j} :=\alpha_ i + \alpha_{i+1} + \ldots + \alpha_j \; ; \; \alpha_{i,i} := \alpha_ i.\] 
We introduce a partial order on $R^+:$
\begin{eqnarray}\label{ord-root1}
\alpha_{i_1, j_1} \geq \alpha_{i_2, j_2} :\Leftrightarrow i_1 \leq i_2 \text{ and } j_1 \leq j_2.
\end{eqnarray}
Note that this is not the usual partial order on roots. The cover relation in the case $\lie{sl}_4$ is: 
\[
\alpha_1 > \alpha_1 + \alpha_2 > \alpha_1 + \alpha_2 + \alpha_3 > \alpha_2 + \alpha_3 > \alpha_3\; ; \; \alpha_1 + \alpha_2 > \alpha_2 > \alpha_2 + \alpha_3.
\]
For any $\alpha_{i,j} \in R^+$, fix a $\lie{sl}_2$-triple $\{ e_{\alpha_{i,j}}, h_{\alpha_{i,j}} = [e_{\alpha_{i,j}}, f_{\alpha_{i,j}}], f_{\alpha_{i,j}} \}$, where ${e_{\alpha_{i,j}} = E_{i,j+1}, f_{\alpha_{i,j}} = E_{i+1, j}}$.\\
The lattice of integral weights is denoted $P$, and the fundamental weights $\omega_i ,i = 1 , \ldots, n$, $P^+ = \bigoplus_{i \in I} \bz_{\geq 0} \omega_i$ denotes the set of dominant integral weights.\\
We denote the universal enveloping algebra by $U(\lie g)$, the famous PBW theorem implies that the set of monomials in an ordered basis of $\lie{sl}_{n+1}$ is basis of $U(\lie g)$.


\subsection{Weyl group combinatorics}
We denote the Weyl group of $\lie{sl}_{n+1}$ by $W$. Then $W$ is generated by the reflections at simple roots, namely $s_i, i = 1, \ldots, n$, and $W$ is isomorphic to the group of permutations of $\{1, \ldots, n+1\}$.\\
\begin{defn}
We say $w \in W $ is \textit{triangular} if $w$ satisfies the following property:\\
Let $i < k \leq j < \ell$ and suppose $w(i) > w(j), w(k) > w(\ell)$, then
\begin{equation}
w(i) > w(\ell) \, , \, w(k) \geq w(j)
\label{eq:def-rec}
\end{equation}
\end{defn}

The study of triangular subsets might be motivated by the following examples of triangular Weyl group elements.

\begin{exam}\label{exam-tri} Denote $s_i := (i, i+1) \in S_{n+1}$, then the following elements in $S_{n+1}$ are triangular:
\begin{itemize}
\item $w = \left(s_{i+k}s_{i+k+1} \cdots s_{i + 2k}\right)\left(s_{i+k-1} \cdots s_{i+2(k-1)}\right)\cdots \left(s_{i+1} s_{i+2}\right)s_i
$
for any $1 \leq i,k \leq n$ such that $i + 2k \leq n$.
\item $w = \left(s_{i+j} \cdots s_i \right)\left(s_{n} \cdots s_{2}\right) \cdots \left(s_n s_{n-1}\right) s_n$ for any $i + j \leq n$.
\item Any longest element in a subgroup generated by simple reflections.
\item Any Kempf element is triangular (see Definition~\ref{kempf} and Proposition~\ref{kempf-prop}).
\item Not every Weyl group element is triangular, the ''smallest'' element which is not triangular is $s_1s_3s_2 \in S_4$.
\end{itemize}
Especially the first one appeared before in the context of PBW graded modules. Namely it is shown in \cite{CLL14}, that the PBW graded module $V(\lambda)^a$ is in fact isomorphic, as a module for $\lie b \oplus \lie n^a$ to the Demazure $V_w(\tilde{\Lambda})$, where $V(\tilde{\lambda})$ a simple module $\lie{sl}_{2n}$-module of highest weight $\tilde{\lambda}$. We see that we could apply our degeneration to this module $V_w(\tilde{\lambda})$, but of course, this module is already PBW graded.
\end{exam}
To each $w \in W$ we associate the following subsets of roots:
\[
R^+_w := \{ \alpha\in R^+ \, | \, w^{-1}(\alpha) \in R^- \} \subset R^+ \; ; \;  R^-_w := w^{-1}(R^+) \cap R^-, 
\]
then for  $i \leq j$:
\begin{equation}
-\alpha_{i,j} \in R^-_w \Leftrightarrow w(i) > w(j+1).
\label{eq:roots}
\end{equation}


\subsection{Triangular subsets}
We translate the term \textit{triangular Weyl group element} to subsets of roots $R_w^-$. We start with a more general construction and see that the case, we are interested in, is covered by this construction.
\begin{defn}
A subset $A \subset R^+$ is called \textit{triangular} if and only if for all $ \alpha_{i_1, j_1} > \alpha_{i_2, j_2}$ with $ i_2 \leq j_1 + 1$:
\[ \alpha_{i_1,j_1}, \alpha_{i_2, j_2} \in A  \Rightarrow \alpha_{i_1, j_2} \in A \text{ and if } i_2 \leq j_1:  \alpha_{i_2, j_1} \in A.
\]
\end{defn}
\begin{rem} Another way of introducing is: \\
Let $\beta_1, \beta_2 \in R^+$ and we introduce yet another partial order on $R^+$ (the usual one):
\[
\beta_1  \trianglelefteq \beta_2 :\Leftrightarrow \beta_2 - \beta_1 \in R^+.
\]
We set further $\operatorname{supp } \alpha_{i,j} := \{i, \ldots,j \}$. \\
Suppose $\beta_1, \beta_2 \in A$ and $\operatorname{supp } \beta_1 \cup \operatorname{supp} \beta_2$ is connected. Then $A$ is called \textit{triangular} if and only if the minimal $\gamma \in R^+$ s.t. $\beta_1, \beta_2 \trianglelefteq \gamma$, is in $A$ and, if it exists, the maximal $\delta \in R^+$, s.t. $\beta_1, \beta_2 \trianglerighteq \delta$, is also in $A$ (certainly $\delta = \beta_1 + \beta_2 - \gamma$).
\end{rem}
\begin{exam}\label{examp1}
Let us explain the term \textit{triangular} with the following picture. The set of positive roots can be arrange in the triangle of lower triangular matrices:
$$
\left( 
\begin{array}{cccccccc}
\alpha_{1} & & & & & \\
\alpha_{1,2} & \alpha_{2} & & & & \\
\alpha_{1,3} & \alpha_{2,3} & \alpha_{3}& & & \\
\vdots & \vdots & \vdots & & & \\
\alpha_{1,n} & \alpha_{2,n}&  \alpha_{3,n} & \ldots &\alpha_{n}\\
\end{array}
\right)
$$
A subset $A \subset R^+$ is called \textit{triangular} if for any pair of roots 
\[\alpha_{i_1, j_1} >  \alpha_{i_2, j_2} \in A, \text{ with } i_2 \leq j_1 + 1 ,\]
the root $ \alpha_{i_1, j_2}$ in the triangle
$$
\left( 
\begin{array}{ccccccccc}
& & & & & & \\
& \alpha_{i_1, j_1} & & & &  & \\
&  & & & & & \\
& \alpha_{i_1,j_2} &  &  & &\alpha_{i_2, j_2}  &\\
\end{array}
\right)
$$
is also in $A$. Further if $i_2 \leq j_1 $, then the root $ \alpha_{i_2, j_1}$ in the triangle
$$
\left( 
\begin{array}{ccccccccc}
& & & & & & \\
& \alpha_{i_1, j_1} & & & \alpha_{i_2,j_1}  & &\\
&  & & & & & \\
&  &  & & \alpha_{i_2, j_2}  &\\
\end{array}
\right)
$$
is also in $A$.
\end{exam}

To each subset $A \subset R^+$ we can associate the subspace generated by the corresponding root vectors.
\[
\lie n_A := \langle f_\alpha \, | \, \alpha \in A \rangle_{\bc} \subseteq \lie n
\]
\begin{cor} Suppose $A \subset R^+$ is triangular, then  $\lie n_A$ is a Lie subalgebra of $\lie n$.
\end{cor}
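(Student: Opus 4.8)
The plan is to reduce the statement to the combinatorial closure property defining triangular subsets. Since $\lie n_A$ is by definition the $\bc$-span of the root vectors $f_\alpha$, $\alpha \in A$, and $\lie n = \lie n^-$ is itself a Lie algebra, it is enough to show that $[f_{\beta_1},f_{\beta_2}] \in \lie n_A$ for all $\beta_1,\beta_2 \in A$. The bracket $[f_{\beta_1},f_{\beta_2}]$ lies in the root space $\lie g_{-(\beta_1+\beta_2)}$, so it vanishes unless $\beta_1+\beta_2 \in R^+$; and since $\lie{sl}_{n+1}$ is simply laced, whenever $\beta_1+\beta_2 \in R^+$ one has $[f_{\beta_1},f_{\beta_2}] = c\, f_{\beta_1+\beta_2}$ with $c \neq 0$. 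Hence the corollary follows as soon as we know: if $\beta_1,\beta_2 \in A$ and $\beta_1+\beta_2 \in R^+$, then $\beta_1+\beta_2 \in A$.

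Next I would make explicit when the sum of two positive roots is again a root. Writing $\beta_1 = \alpha_{i_1,j_1}$ and $\beta_2 = \alpha_{i_2,j_2}$, the sum $\beta_1+\beta_2$ is a positive root exactly when the supports $\{i_1,\dots,j_1\}$ and $\{i_2,\dots,j_2\}$ are disjoint and their union is again an interval; after possibly interchanging $\beta_1$ and $\beta_2$ (which only flips the sign of the bracket) we may assume $i_2 = j_1+1$, and then $\beta_1+\beta_2 = \alpha_{i_1,j_2}$.

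It then remains to invoke triangularity directly. With the above normalization, $i_1 \le j_1 < j_1+1 = i_2$ and $j_1 < j_1+1 \le j_2$, so $\alpha_{i_1,j_1} > \alpha_{i_2,j_2}$ in the order \eqref{ord-root1}; moreover $i_2 = j_1+1 \le j_1+1$, so the pair $(\beta_1,\beta_2)$ meets exactly the hypotheses in the definition of a triangular subset. Its first conclusion gives $\alpha_{i_1,j_2} = \beta_1+\beta_2 \in A$, as wanted; the second conclusion, concerning $\alpha_{i_2,j_1}$, is vacuous here since $i_2 = j_1+1 > j_1$. (Alternatively one may phrase this with the formulation from the Remark: $\beta_1+\beta_2$ is the minimal $\gamma \in R^+$ with $\beta_1,\beta_2 \trianglelefteq \gamma$, so it lies in $A$ by the very definition.)

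Everything used here is elementary, so I do not expect a real obstacle. The only point that requires a little care is the bookkeeping of index inequalities that matches a pair of roots of $A$ whose sum is a root with a pair falling under the triangularity axiom, together with the standard simply-laced fact that the structure constant $c$ above is nonzero, so that $f_{\beta_1+\beta_2}$ genuinely appears in the bracket.
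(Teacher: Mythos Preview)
Your proof is correct and follows the same approach as the paper's: reduce to closure under the bracket, observe $[f_{\beta_1},f_{\beta_2}]\in\bc f_{\beta_1+\beta_2}$, and use triangularity to conclude $\beta_1+\beta_2\in A$ whenever this sum is a root. The paper's argument is simply a terser version of yours; your final remark about $c\neq 0$ is not actually needed (if $c=0$ the bracket lies in $\lie n_A$ trivially), but it does no harm.
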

\begin{proof}
We have to check that $\lie n_A$ is closed for the adjoint action. Suppose $[ f_\alpha, f_\beta]  \neq 0 $, then $[ f_\alpha, f_\beta] \in \bc f_{\alpha + \beta}$. Triangular implies that if $\alpha, \beta \in A$ and $\alpha + \beta \in R^+$, then $\alpha + \beta \in A$.
\end{proof}

\begin{prop}\label{tritri}
$w \in W$ is triangular if and only if $-R_w^-$ is triangular.
\end{prop}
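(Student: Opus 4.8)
The plan is to unwind both notions of ``triangular'' into one and the same combinatorial condition on the permutation $w$, using the dictionary \eqref{eq:roots}. Since $R^-_w = w^{-1}(R^+)\cap R^-$, we have $-R^-_w = \{\alpha_{i,j}\in R^+ \mid w(i) > w(j+1)\}$. I would first reparametrize: attach to a root $\alpha_{i,j}$ the pair $(i,j+1)$, so that membership $\alpha_{i,j}\in -R^-_w$ becomes the inequality $w(i) > w(j+1)$ between the two entries. Under this substitution a pair of roots $\alpha_{i_1,j_1} > \alpha_{i_2,j_2}$ with $i_2 \le j_1+1$ corresponds to a chain of indices $p:=i_1 \le r:=i_2 \le q:=j_1+1 \le s:=j_2+1$ in $\{1,\dots,n+1\}$, under which the hypotheses $\alpha_{i_1,j_1},\alpha_{i_2,j_2}\in -R^-_w$ read $w(p) > w(q)$ and $w(r) > w(s)$, while the conclusions $\alpha_{i_1,j_2}\in -R^-_w$ and (when $i_2\le j_1$) $\alpha_{i_2,j_1}\in -R^-_w$ read $w(p) > w(s)$ and $w(r) > w(q)$. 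Note $p < q$ and $r < s$ always hold, so the only possible coincidences in the chain are $p=r$, $r=q$, or $q=s$, and at most one of these can occur.

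Next I would dispose of those degenerate configurations. If $p=r$, or $q=s$, or $r=q$, then in each case the desired conclusions already follow from $w(p)>w(q)$, $w(r)>w(s)$ and transitivity: for $r=q$ one gets $w(p) > w(q) = w(r) > w(s)$ (and the second conclusion is not demanded, since $i_2 = j_1+1$); in the other two cases one conclusion is literally a hypothesis and the other follows as before. Hence triangularity of $-R^-_w$ is equivalent to the single implication: for all $p < r < q < s$ in $\{1,\dots,n+1\}$, if $w(p) > w(q)$ and $w(r) > w(s)$ then $w(p) > w(s)$ and $w(r) > w(q)$. Conversely every quadruple $p < r < q < s$ genuinely arises from an admissible pair of roots, by taking $i_1=p$, $j_1=q-1$, $i_2=r$, $j_2=s-1$ (which satisfy $\alpha_{i_1,j_1}>\alpha_{i_2,j_2}$ and $i_2\le j_1$), so this implication is exactly the content of the subset condition.

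Finally I would run the analogous reduction on the defining property \eqref{eq:def-rec}: the case $k=j$ of ``$i < k \le j < \ell$'' is automatic for the same transitivity reason ($w(i) > w(j)=w(k) > w(\ell)$, and $w(k)\ge w(j)$ trivially), so \eqref{eq:def-rec} is equivalent to requiring, for all $i < k < j < \ell$, that $w(i) > w(j)$ and $w(k) > w(\ell)$ imply $w(i) > w(\ell)$ and $w(k) > w(j)$ — the strictness of the last inequality being free since $w$ is injective and $k\ne j$. Matching $(i,k,j,\ell)$ with $(p,r,q,s)$ this is verbatim the implication obtained in the previous paragraph, which yields the claimed equivalence.

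The argument is essentially bookkeeping and carries no analytic content; the one point deserving care is lining up the index substitution $\alpha_{i,j}\leftrightarrow(i,j+1)$ and the constraint $i_2\le j_1+1$ with the chain $p\le r\le q\le s$, and checking that the two boundary sub-cases of the subset definition (whether $i_2\le j_1$, i.e. $r<q$, or $i_2 = j_1+1$, i.e. $r=q$) correspond correctly to the two boundary sub-cases of \eqref{eq:def-rec} (whether $k<j$ or $k=j$) — in particular that the case $i_2=j_1+1$, where only one conclusion is imposed, is precisely the case where \eqref{eq:def-rec} is automatic. The only real risk is a miscount of indices.
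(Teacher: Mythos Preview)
Your argument is correct and follows the same route as the paper's proof: translate both conditions through the dictionary \eqref{eq:roots} via the substitution $\alpha_{i,j}\leftrightarrow(i,j{+}1)$ and observe that they become the same implication on quadruples of indices. You are in fact more thorough than the paper, which writes only the forward direction explicitly and does not separate out the boundary cases $p=r$, $r=q$, $q=s$ (or the corresponding case $k=j$ in \eqref{eq:def-rec}); your explicit disposal of these, and your remark that every strict quadruple $p<r<q<s$ actually arises from an admissible pair of roots, makes the ``if and only if'' genuinely watertight.
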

\begin{proof}
Let $w \in W$. Let $i \leq k \leq j+1 < \ell + 1$ and suppose $\alpha_{i,j}, \alpha_{k, \ell} \in -R_w^-$. Via the description \eqref{eq:roots} we see that this implies
\[
w(i) > w(j+1)\,  , \, w(k) > w(\ell + 1).
\]
Since $w$ is triangular we have by definition \eqref{eq:def-rec}:
\[
w(i) > w(\ell+1), w(k) \geq w(j+1).
\]
But again with \eqref{eq:roots} this is equivalent to
\[
\alpha_{i,\ell} \in -R_w^- \text{ and if } k \leq j : \alpha_{k,j} \in -R^-_w
\]
\end{proof}


\subsection{Kempf elements}
\begin{defn}[\cite{HL85}]\label{kempf} Let $w \in S_{n+1}$, say $w = w_1 w_2 \cdots w_n$ where $w_i$ is a right-end segment of $u_i = s_n \cdots s_{i+1}s_i$. $w$ is called Kempf element if 
\[
\ell(w_i) \leq \ell(w_{i+1}) + 1 \text{ whenever } w_{i+1} < u_{i+1} , 1 \leq i \leq n-1.
\]
\end{defn}

\begin{lem}\label{kempf-prop}
Let $w \in S_{n+1}$ be Kempf, then $w$ is triangular.
\end{lem}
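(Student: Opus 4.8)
The plan is to work directly with the permutation-theoretic definitions. Given a Kempf element $w = w_1 w_2 \cdots w_n$ with each $w_i$ a right-end segment of $u_i = s_n \cdots s_{i+1}s_i$, I would first translate the Kempf condition into an explicit description of the values $w(1), \ldots, w(n+1)$, or rather of the descent pattern $w(i) > w(j+1)$ that governs membership in $-R_w^-$ via \eqref{eq:roots}. Recall that the right-end segment $w_i$ of $u_i$ acts by a cyclic-type shift on an interval whose left endpoint is $i$ and whose length is $\ell(w_i)+1$; stringing these together, the Kempf condition $\ell(w_i) \le \ell(w_{i+1}) + 1$ (when $w_{i+1} < u_{i+1}$) forces the ``reach'' of the $i$-th block to extend at most one step past that of the $(i+1)$-th block. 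I would set $r_i := i + \ell(w_i)$, the rightmost index moved by $w_i$, and record that $r_i \le \max(i, r_{i+1})$ with the convention handling the case $w_{i+1} = u_{i+1}$ separately.

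Next I would verify the triangularity inequalities \eqref{eq:def-rec}: assume $i < k \le j < \ell$ with $w(i) > w(j)$ and $w(k) > w(\ell)$, and deduce $w(i) > w(\ell)$ and $w(k) \ge w(j)$. The key structural fact I expect to need is that, for a Kempf element, the set $R_w^-$ of inversions has a ``staircase'' shape: if one writes $\beta_i$ for the length of the row, i.e. the number of $j$ with $\alpha_{i,j} \in -R_w^-$, then $\beta_1 \ge \beta_2 - 1$, $\beta_i$ controlled by the block lengths, and more importantly the inversions in consecutive rows are nested up to a shift of one. Concretely I would show: if $\alpha_{i,j} \in -R_w^-$ and $i' > i$ with $i' \le j+1$, then either $\alpha_{i',j} \in -R_w^-$ or $\alpha_{i',j-1}\in -R_w^-$ (rows shrink by at most one from the bottom as the index increases) — this is exactly the combinatorial content of $\ell(w_i) \le \ell(w_{i+1})+1$. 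From such a nesting statement, both required inequalities follow: $w(k) > w(\ell)$ together with $i<k$ pushes the inversion $\alpha_{i,?}$ far enough right to get $w(i) > w(\ell)$, and the ``shrink by at most one'' gives $w(k) \ge w(j)$.

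Alternatively — and this may be the cleaner route — I would use Proposition~\ref{tritri} and check instead that $A := -R_w^-$ is a triangular subset in the sense of the definition, i.e. that for $\alpha_{i_1,j_1} > \alpha_{i_2,j_2}$ in $A$ with $i_2 \le j_1+1$ we have $\alpha_{i_1,j_2} \in A$, and $\alpha_{i_2,j_1}\in A$ when $i_2 \le j_1$. Here $\alpha_{i_1,j_1} > \alpha_{i_2,j_2}$ means $i_1 \le i_2$ and $j_1 \le j_2$. Membership $\alpha_{i,j}\in A$ is equivalent to $j$ lying in the range of indices that row $i$ inverts, which by the block description is an \emph{interval} $[\,a_i, b_i\,]$ (the Kempf/right-end-segment structure makes each row's inversion set an interval of $j$'s). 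So I must show: the intervals $[a_i,b_i]$ satisfy $a_{i_1} \le a_{i_2}$ whenever they overlap appropriately, plus a compatibility of right endpoints, so that $j_1 \le j_2$ and $i_2 \le j_1+1$ force $j_2 \in [a_{i_1}, b_{i_1}]$ and $j_1 \in [a_{i_2}, b_{i_2}]$. The interval structure of the rows plus $\ell(w_i)\le\ell(w_{i+1})+1$ yields exactly the needed comparisons of endpoints.

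\textbf{Main obstacle.} The real work is bookkeeping: unwinding how the factorization $w = w_1\cdots w_n$ into right-end segments of the $u_i$ translates into the one-line notation and hence into the shape of $R_w^-$, and isolating the precise role of the clause ``whenever $w_{i+1} < u_{i+1}$''. The degenerate cases where some $w_{i+1} = u_{i+1}$ (so the length hypothesis on $w_i$ is vacuous) need to be folded in without breaking the staircase/interval picture; I expect this to be where a naive induction stalls and a careful normal-form argument for Kempf elements is required. Once the interval-per-row description of $-R_w^-$ is nailed down, verifying the two triangularity inequalities is a short comparison of endpoints.
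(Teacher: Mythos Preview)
Your proposal contains a genuine structural error. The claim that for a Kempf element $w$ the inversion set in each ``row'' (i.e., for fixed first index $i$, the set of $j$ with $\alpha_{i,j}\in -R_w^-$) is an interval $[a_i,b_i]$ is \emph{false}. Take $n=4$ and $w = w_1w_2w_3w_4$ with $w_1=s_1$, $w_2=s_3s_2$, $w_3=s_3$, $w_4=s_4$; each $w_i$ is a right-end segment of $u_i$, and one checks the Kempf inequality $\ell(w_i)\le\ell(w_{i+1})+1$ holds whenever $w_{i+1}<u_{i+1}$ (the last condition is vacuous since $w_4=u_4$). In one-line notation $w = 24351$, so
\[
-R_w^- \;=\; \{\alpha_{1,4},\,\alpha_{2,2},\,\alpha_{2,4},\,\alpha_{3,4},\,\alpha_{4,4}\},
\]
and row $2$ is $\{\alpha_{2,2},\alpha_{2,4}\}$, which misses $\alpha_{2,3}$. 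The whole endpoint-comparison scheme you outline therefore cannot be carried out. Your first approach (``rows shrink by at most one'') encodes the same intuition and runs into the same obstruction: the vacuous case $w_{i+1}=u_{i+1}$ really does destroy the staircase monotonicity you are hoping for, not just complicate the bookkeeping.

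The paper circumvents this by working with the \emph{complement}: it shows $-R_w^- = R^+\setminus S$ where $S$ is an explicit union of rectangular blocks (one for each $i$, of width $i$ and height $\ell_{i+1}-\ell_i$, with upper-right corner at $\alpha_{i,i}$). The triangularity of $R^+\setminus S$ is then verified directly from this block description, by arguing that if $\alpha_{i_1,j_2}\in S$ then a maximal block witnessing this forces the column index below $i_2$ (contradicting $i_2\le j_1+1$), and similarly for $\alpha_{i_2,j_1}$. No per-row interval structure of $-R_w^-$ itself is needed or used, and Proposition~\ref{tritri} closes the argument. If you want to salvage your approach, the right object to analyse is $S$, not $-R_w^-$.
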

\begin{proof}
Let $w$ be Kempf, $w = w_1 \cdots w_n$, where $w_i = (s_{\ell_i} \cdots s_i), n \geq \ell_i \geq i$. Since $w$ is Kempf we have $l(w_i) \leq l(w_{i+1})+1$ if $\ell_{i+1} \neq n$ and hence $\ell_i \leq \ell_{i+1}$. We will compute $w^{-1}(R^-)$ and show that this set is triangular. First of all remark that $w_0^{-1}(R^-) = R^+$ for $w_0$ the longest Weyl group element. The set of all positive roots forms a triangle $\{ \alpha_{i,j} \, | \, 1 \leq i \leq n, i \leq j \leq n \}$ (see Example~\ref{examp1}), where the $i$-th column is $\{ \alpha_{i,i}, \ldots, \alpha_{i,n} \}$. \\
For the given $w$ be will compute the positive roots which are not in $w^{-1}(R^-)\cap R^+$ and show that they form a useful pattern. We consider 
\[
S = \bigcup_{i=1}^{n} \{ \alpha_{k,i+j} \, | \,  1 \leq k \leq i, 0 \leq j \leq \ell_{i+1} - \ell_i - 1 \}.
\]
For each $i$ we have a block from column $1$ to column $i$ of height $\ell_{i+1} - \ell_i$, where the upper right corner is the root $\alpha_{i,i}$. Note that if $\ell_{i+1} = \ell_i$ this block is empty. We can assume that none of the $w_i$ is trivial (equal to the identity), else the problem would be split into two independent (smaller problems).\\
Claim:
\[
w^{-1}(R^-)\cap R^+ = R^+ \setminus S.
\]
It is easy to verify that both sets have the same cardinality: $\ell(w) = |R^+| - |S|$. So it remains to show that if $\alpha \in R^+ \setminus S$, then $\alpha \in w^{-1}(R^-)\cap R^+$. \\
Let $\alpha_{i,j} \in R^+ \setminus S$, we have to show that $w(\alpha_{i,j}) \in R^-$. Now, since $\alpha_{i,j} \in R^+ \setminus S$, we have $\alpha_{k,j} \in R^+ \setminus S$ for all $i \leq k \leq j$. Further $w_{j+2} \cdots w_n (\alpha_{i,j}) = \alpha_{i,j}$ and $w_{j+1}(\alpha_{i,j}) = \alpha_{i,\ell_{j+1}}$.\\
We have $\alpha_{k,j} \in R^+ \setminus S$ for all $i \leq k \leq j$, this implies that $\ell_j = \ell_{j+1}$, else we would have $\alpha_{j,j} \in S$ which is a contradiction. This implies that 
\[
w_j(\alpha_{i,\ell_{j+1}}) = \alpha_{i, \ell_{j} - 1}.
\]
Again, since $\alpha_{k,j} \in R^+ \setminus S$ for all $i \leq k \leq j$, we have $\ell_j -1 \leq \ell_{j-1} \leq \ell_j$, this implies that  
\[
w_{j-1}(\alpha_{i,\ell_{j}-1}) = \alpha_{i, \ell_{j} - 2}.
\]
Iterating this gives
\[
w_{i+1} \cdots w_{j+1}(\alpha_{i,j}) = \alpha_{i, \ell_j - (j-i)}
\]
and $\ell_j - (j-i) \leq \ell_i$. Now 
\[
w_i(\alpha_{i, \ell_j - (j-i)}) = -\alpha_{\ell_j - (j-i),  \ell_i}.
\]
Now the claim follows since $w_1 \cdots w_{i-1}(-\alpha_{\ell_j - (j-i),  \ell_i}) \in R^-$.\\

\noindent
With Proposition~\ref{tritri}, it remains to show that $R^+ \setminus S$ is triangular. \\
Let 
\[
\alpha_{i_1, j_1}, \alpha_{i_2, j_2} \in R^+\setminus S  \, , \, i_1 < i_2\,,\, j_1 < j_2\, , \, i_2 \leq j_1 + 1,
\]
and suppose $\alpha_{i_1, j_2} \in S$. \\
Let $k$ be maximal such that $\alpha_{i_1 + k, j_2} \in S$, by construction of $S$, and since $\alpha_{i_1, j_2}$ is not a simple root: $k \geq 1$,  $i_1 + k < i_2$. Then for all $i_1 + k \leq l \leq j_2$: $\alpha_{i_1 + k, l} \in S$. This implies that $j_1 < i_1 + k < i_2$ and so $j_1 + 2 \leq i_2$. This is a contradiction to $i_2 \leq j_1 + 1$.\\
Further, if $i_2 \leq j_1$ and $\alpha_{i_2, j_1} \in S$, then by definition of $S$: $\alpha_{k, j_1} \in S$ for all $1 \leq k \leq i_2$, so especially $\alpha_{i_1, j_1}$, which is again a contradiction.
\end{proof}
\begin{rem}\label{kempf-rem}
The converse of the proposition is not true, the set of Kempf elements is a proper subset of the set of triangular elements. For example in the $S_4$-case: $w = s_2 s_3 s_1$ is triangular (since $-R_w^- = \{ \alpha_1, \alpha_3, \alpha_1 + \alpha_2 + \alpha_3 \}$) but it is not Kempf.
\end{rem}

\section{Posets and polytopes}\label{sec-two}

\subsection{Dyck path}
We recall the definition of a Dyck path for $\lie{sl}_{n+1}$ due to \cite{FFoL11a}:\\
A Dyck path is a sequence of positive roots $\mathbf{p} = ( \beta_1, \ldots, \beta_s)$
such that $\beta_1$ and $\beta_s$ are simple roots and if $\beta_k = \alpha_{i,j}$, then $\beta_{k+1} = \alpha_{i+1, j}$ or $\alpha_{i, j+1}$, especially $\beta_k > \beta_{k+1}$. We denote $\mathbb{D} \subset \mathcal{P}(R^+)$ the set of all Dyck paths for $\lie{sl}_{n+1}$.
 For $\mathbf{p} \in \mathbb{D}$, say $\bp = ( \alpha_{i_1, j_1}, \ldots, \alpha_{i_s, j_s} )$, we define the \textit{base root of} $\mathbf{p}$:
\[
\beta_\mathbf{p} := \alpha_{i_1, j_s}.
\]

We turn to the case of the current paper. Let $ A \subset R^+$ be any subset, then we define paths and Dyck paths for $A$  by restricting Dyck paths for $R^+$:
\begin{defn}
We say $\mathbf{p}  \in \mathcal{P}(A)$ is a \textit{path for} $A$ if there exists a $\mathbf{q} \in \mathbb{D}$ such that $\mathbf{p} = \mathbf{q} \cap A$, e.g. we may assume that there exist $i_1 \leq \ldots \leq i_s, j_1 \leq \ldots \leq j_s$ such that
\[
\mathbf{p} = (\alpha_{i_1, j_1}, \ldots, \alpha_{i_s, j_s}) \in \mathcal{P}(A).
\]
A path $\mathbf{p}$ is called \textit{Dyck path for} $A$ if for all $1 \leq k, \ell \leq s$:
\[
i_k \leq j_\ell \Rightarrow \alpha_{i_k , j_\ell} \in A.
\]
Let $\mD_A$ denote the set of all Dyck path of $A$.
\end{defn}
We define the base root of a Dyck path $\mathbf{p} = (\alpha_{i_1, j_1}, \ldots, \alpha_{i_s, j_s} ) \in \mathbb{D}_A$ similar to Dyck paths in $\mD$: 
\[
\beta_\mathbf{p} := \alpha_{i_1,  j_s} \in A.
\]
\begin{rem}\label{tri-paths} For the sake of notation and legibility, we are a bit sloppy here by assuming that for all path we have $j_s + 1 \geq i_{s+1}$, hence the union of the support of successors is connected. If this union is not connected, the path would split into two path, supported on unrelated set of roots. This path would be redundant for the definition of the polytope.\\
In this sense we can say, that if $A\subset R^+$ is triangular then all paths of $A$ are Dyck path.
\end{rem}


\subsection{Polytopes}
Again following  \cite{FFoL11a}, we define polytopes using Dyck paths. Let $\lambda \in P^+$, $A \subset R^+$ be any subset, the polytopes $ P(\lambda) \subset \mathbb R_{\geq 0}^{\sharp R^+}, P_A(\lambda) \subset \mathbb R_{\geq 0}^{\sharp A}$ are defined as follows:
\[
P(\lambda) = \{ (s_\alpha)_{\alpha \in R^+} \in (\mathbb R_{\geq 0})^{\sharp R^+} \, | \,\forall \, \mathbf{p} \in \mathbb{D}: \sum_{\alpha \in \mathbf{p}} s_\alpha \leq \lambda (h_{\beta_\mathbf{p}}) \}
\]
and
\[
P_A(\lambda) = \{ (s_\alpha)_{\alpha \in A}  \in  (\mathbb R_{\geq 0})^{\sharp A} \, | \,\forall \, \mathbf{p} \in \mathbb{D}_A: \sum_{\alpha \in \mathbf{p}} s_\alpha \leq \lambda (h_{\beta_\mathbf{p}}) \}
\]
We denote the set of lattice points in $P(\lambda)$ (resp. $P_A(\lambda)$):
\[
S(\lambda) = P(\lambda) \cap (\mathbb{Z}_{\geq 0})^{\sharp R^+} \, , \, S_A(\lambda) = P_A(\lambda) \cap (\mathbb{Z}_{\geq 0})^{\sharp S}.
\]

The polytope $P(\lambda)$ has been defined in \cite{FFoL11a} (E.~Vinberg suggested this polytope in a conference talk without publication) and it has been proved in \cite{FFoL11a}:
\begin{thm*} $P(\lambda)$ is a normal polytope and further, for all $\lambda, \mu \in P^+$: $S(\lambda) + S(\mu) = S(\lambda + \mu)$ (the Minkowski sum). 
\end{thm*}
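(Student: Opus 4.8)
The plan is to establish the Minkowski identity $S(\lambda)+S(\mu)=S(\lambda+\mu)$ and to deduce normality from it. Since every Dyck-path constraint has the form $\sum_{\alpha\in\mathbf p}s_\alpha\le\lambda(h_{\beta_{\mathbf p}})$ with right-hand side linear in $\lambda$, one has $k\,P(\lambda)=P(k\lambda)$ as polytopes for every $k\in\mathbb Z_{>0}$; iterating the Minkowski identity then shows that the lattice points of $k\,P(\lambda)=P(k\lambda)$ are precisely the $k$-fold sumsets of $S(\lambda)$, which is the definition of a normal polytope. The inclusion $S(\lambda)+S(\mu)\subseteq S(\lambda+\mu)$ is immediate, since adding the inequality for a Dyck path $\mathbf p$ at $\lambda$ to the one at $\mu$ yields the inequality at $\lambda+\mu$ and nonnegativity is preserved. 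So the whole content is the reverse inclusion: given $\mathbf s\in S(\lambda+\mu)$ one must produce a splitting $\mathbf s=\mathbf s'+\mathbf s''$ with $\mathbf s'\in S(\lambda)$ and $\mathbf s''\in S(\mu)$.

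For the reverse inclusion I would induct on $n$. The base case $\mathfrak{sl}_2$ is the trivial identity $\{0,\dots,a\}+\{0,\dots,b\}=\{0,\dots,a+b\}$ for the single variable $s_{1,1}$. For the inductive step, split the positive roots of $\mathfrak{sl}_{n+1}$ into the interior roots $\alpha_{i,j}$ with $j\le n-1$ — on which the constraints are exactly the Dyck-path constraints for $\mathfrak{sl}_n$ with the restricted weight — and the last column $\{\alpha_{i,n}\}$. A Dyck path either lies entirely in the interior, hence is a Dyck path for $\mathfrak{sl}_n$, or, once it reaches a root $\alpha_{i,n}$, is forced to continue $\alpha_{i,n}\to\alpha_{i+1,n}\to\dots\to\alpha_{n,n}$, so it is a partial $\mathfrak{sl}_n$-path ending in column $n-1$ with a contiguous last-column chain appended. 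The idea is: split the interior part of $\mathbf s$ using the inductive hypothesis, split the last-column entries separately, and then correct the two halves so that all mixed Dyck-path inequalities (interior head plus last-column tail) hold for both $\mathbf s'$ and $\mathbf s''$ at once; each such correction is a one-dimensional balancing along a chain in the last column.

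The step I expect to be the main obstacle is exactly this global correction: the mixed Dyck paths overlap heavily — many share a last-column tail or a common base root — so $\mathbf s$ cannot be split greedily coordinate by coordinate, since repairing one inequality can violate another. I would therefore look for a more robust device, for instance choosing $\mathbf s'$ coordinatewise as large as possible subject to $\mathbf s'\le\mathbf s$ and $\mathbf s'\in S(\lambda)$, and then proving $\mathbf s-\mathbf s'\in S(\mu)$ via a tightness argument on the Dyck paths that are equalities for $\mathbf s'$; or reducing to the case $\mu=\omega_i$ a fundamental weight (peeling off one fundamental weight at a time), where $P(\omega_i)$ degenerates sharply — $\omega_i(h_{\alpha_{k,l}})$ is $1$ when $k\le i\le l$ and $0$ otherwise, so most coordinates vanish and $P(\omega_i)$ becomes the $0/1$ chain polytope of an explicit poset — so that the splitting of the $\omega_i$-part can be read off directly from its structure. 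An alternative would be to invoke the surjection $V(\lambda)^a\otimes V(\mu)^a\twoheadrightarrow V(\lambda+\mu)^a$ onto the Cartan component together with the monomial spanning sets, but that route still needs the combinatorial Minkowski statement as input, so I regard the direct argument as the primary one.
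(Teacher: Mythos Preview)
The paper does not actually prove this theorem: it is quoted verbatim as a result of \cite{FFoL11a} and used as a black box throughout (see Remark~\ref{ffl} and the proofs of Lemma~\ref{mink-sum} and Corollary~\ref{linset}). So there is no ``paper's own proof'' to compare against here.

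That said, your sketch is a reasonable outline of how the cited result is established. The reduction of normality to the Minkowski identity via $kP(\lambda)=P(k\lambda)$ is correct, and the easy inclusion is as you say. For the hard inclusion, your first plan (induction on $n$, split off the last column, then globally correct the mixed paths) is plausible but, as you yourself note, the correction step is where the real work lies and your sketch does not resolve it. Your second suggestion --- reduce to $\mu=\omega_i$ and exploit that $P(\omega_i)$ is a $0/1$ chain-type polytope on the roots containing $\alpha_i$ --- is essentially the route taken in \cite{FFoL11a}: one peels off one fundamental weight at a time and constructs the $\omega_i$-summand explicitly by a greedy/extremal choice along the relevant Dyck paths. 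So the idea is right, but what is written is still a plan rather than a proof; the combinatorial verification that the greedy splitting simultaneously respects all Dyck-path inequalities is the substantive argument, and it is not supplied here.
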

\begin{rem}\label{ffl}
It is important to notice that none of the inequalities is redundant, e.g. one obtains a strictly bigger polytope (for regular $\lambda \in P^+$) if one omits one of the paths.
\end{rem}


\subsection{Marked posets}

We recall certain marked polytopes associated to a marked poset (following \cite{ABS11}, see also \cite{Sta86}).
Given a finite poset $P = \{ x_p \, | \,  p \in P\}$ and a subset $M \subset P$ which contains at least all minimal and maximal elements of $P$, further fix $\lambda \in (\mathbb{Z}_{\geq 0})^{\sharp M}$ satisfying $\lambda_{m_1} \geq \lambda_{m_2}$ if $m_1 \geq m_2$. Then there are two interesting polytopes associated to the tuple $(P, M, \lambda)$, namely the \textit{marked order polytope}
\[
\mathcal{O}(P, M, \lambda) = \{ (x_p) \in \mathbb{R}^{P \setminus M} \, | \,  x_p \leq x_q, p < q ; \lambda_m, \leq x_p , m < p; x_p \leq \lambda_m, p < m \}
\]
and the \textit{marked chain polytope}
\[
\mathcal{C}(P, M, \lambda) =  \{ (x_p) \in \mathbb{R}_{\geq 0}^{P \setminus M} \, | \,  x_{p_1} + \ldots + x_{p_1} \leq \lambda_{m_1} - \lambda_{m_2}; m_2  < p_1 < \ldots < p_s < m_1 \}
\]
\begin{thm*}[\cite{ABS11, Sta86}]
Let $(P, M, \lambda)$ be a marked poset, then $\mathcal{O}(P, M, \lambda)$ and $\mathcal{C}(P, M, \lambda)$ have the same Ehrhart polynomial and hence the number of lattice points is equal.
\end{thm*}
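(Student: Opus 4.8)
The plan is to exhibit the piecewise-linear \emph{transfer map} of Stanley, in the marked form due to \cite{ABS11}, as a bijection $\phi\colon\mathcal O(P,M,\lambda)\to\mathcal C(P,M,\lambda)$ which is unimodular on each of its linear pieces and homogeneous in $(x,\lambda)$; equality of the Ehrhart polynomials, and in particular of the lattice point counts, then follows formally. Extend each $y\in\mathbb R^{P\setminus M}$ to $\bar y\in\mathbb R^{P}$ by $\bar y_m=\lambda_m$ for $m\in M$. Since $M$ contains all minimal elements, every $p\in P\setminus M$ covers some element, and I set
\[
\phi(y)_p=\bar y_p-\max\{\bar y_q : q\lessdot p\}\qquad(p\in P\setminus M),
\]
where $\lessdot$ denotes the covering relation of $P$. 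If $y\in\mathcal O(P,M,\lambda)$ then $\bar y$ is order-preserving on all of $P$ (this is exactly where the hypothesis $\lambda_{m_1}\ge\lambda_{m_2}$ for $m_1\ge m_2$ enters), so $\phi(y)_p\ge 0$; and for a chain $m_2<p_1<\dots<p_s<m_1$ one refines to a saturated chain through the $p_i$ and telescopes, discarding the nonnegative intermediate terms, to obtain $\sum_i\phi(y)_{p_i}\le\bar y_{m_1}-\bar y_{m_2}=\lambda_{m_1}-\lambda_{m_2}$. Hence $\phi$ maps $\mathcal O(P,M,\lambda)$ into $\mathcal C(P,M,\lambda)$.

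For the inverse I would use the other half of Stanley's construction: let $\psi(z)_p$ be the maximum of $\lambda_m+z_{q_1}+\dots+z_{q_t}$ over saturated chains $m=q_0\lessdot q_1\lessdot\dots\lessdot q_t=p$ running up from a marked element $m\in M$ (suitably modified at any intermediate marked element). A direct verification shows $\psi(z)$ is order-preserving with the prescribed marked values, so $\psi$ maps $\mathcal C(P,M,\lambda)$ into $\mathcal O(P,M,\lambda)$, and that $\phi\circ\psi=\mathrm{id}$ and $\psi\circ\phi=\mathrm{id}$; for the second identity one notes that the maximizing chain for $\psi(\phi(y))_p$ is the chain of successive maximizers occurring in the definition of $\phi$. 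Next I would subdivide $\mathcal O(P,M,\lambda)$ into the finitely many rational polyhedra on which, for every $p$, the maximum defining $\phi(y)_p$ is attained at a fixed cover $c(p)\lessdot p$. On such a piece $\phi(y)_p=\bar y_p-\bar y_{c(p)}$ is affine-linear in $(y,\lambda)$, and ordering the coordinates $P\setminus M$ by a linear extension of $P$ makes its linear part lower unitriangular, hence of determinant $1$; the analogous statement holds for $\psi$. Thus $\phi$ is a piecewise-linear bijection which is integral and unimodular on each piece, so it restricts to a bijection between the lattice points of $\mathcal O(P,M,\lambda)$ and of $\mathcal C(P,M,\lambda)$.

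Finally, all defining (in)equalities of $\mathcal O$ and $\mathcal C$, as well as the formula for $\phi$ and the chamber subdivision above, are homogeneous in $(x,\lambda)$, so $n\cdot\mathcal O(P,M,\lambda)=\mathcal O(P,M,n\lambda)$ and $n\cdot\mathcal C(P,M,\lambda)=\mathcal C(P,M,n\lambda)$, and $\phi$ carries one onto the other, lattice points to lattice points bijectively, for every $n\ge 1$. Hence $|\,n\mathcal O(P,M,\lambda)\cap\mathbb Z^{P\setminus M}|=|\,n\mathcal C(P,M,\lambda)\cap\mathbb Z^{P\setminus M}|$ for all $n$. Since the vertices of $\mathcal O(P,M,\lambda)$ are integral and those of $\mathcal C(P,M,\lambda)$ are their images under the unimodular map $\phi$, both polytopes are lattice polytopes, so these two counting functions are polynomials by Ehrhart's theorem; agreeing on all $n\ge 0$, they coincide as polynomials, and in particular $\mathcal O(P,M,\lambda)$ and $\mathcal C(P,M,\lambda)$ contain the same number of lattice points.

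I expect the main work to lie in the construction and analysis of $\phi$ and $\psi$: checking that $\phi$ really lands in the chain polytope, that $\psi$ is a genuine two-sided inverse, and, most delicately, handling saturated chains that pass through marked elements other than their endpoints. Once these combinatorial facts are in place, the piecewise-linearity, unimodularity and homogeneity used to pass to all dilations are routine.
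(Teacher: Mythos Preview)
The paper does not supply a proof of this theorem; it is quoted as a result of Stanley \cite{Sta86} and Ardila--Bliem--Salazar \cite{ABS11} and used as a black box. Your sketch is essentially the argument given in those references: the piecewise-linear transfer map $\phi$ you wrote down is precisely Stanley's map, adapted to the marked setting in \cite{ABS11}, and the unimodularity/homogeneity observations you make are exactly what those authors use to conclude equality of Ehrhart polynomials. So there is nothing to compare against within this paper, and your approach matches the cited sources.

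One small caution on your sketch: the handling of intermediate marked elements in the definition of $\psi$ is where the marked version genuinely differs from Stanley's original, and your parenthetical ``suitably modified at any intermediate marked element'' hides the only nontrivial point. In \cite{ABS11} this is dealt with by resetting the accumulated sum to the marking whenever the chain hits an element of $M$; you should make that explicit if you write this up in full, since without it $\psi$ need not land in $\mathcal O(P,M,\lambda)$ when $M$ contains non-extremal elements. Apart from that, the outline is sound.
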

Recall the partial order on $R^+$ in our context:
\[
\alpha_{i_1, j_1} \geq \alpha_{i_2, j_2} :\Leftrightarrow i_1 \leq i_2 \text{ and } j_1 \leq j_2.
\]
By adding a set of vertices $M = \{ a_1, \ldots, a_n, a_{n+1} \}$, with relations
\[ \alpha_{i+1} >  a_i > \alpha_i, i= 2, \ldots, n \text{  and } a_1 > \alpha_1, \alpha_n > a_{n+1},\] 
we extend this partial order to $R^+ \cup M$ and obtain a partial order on $A \cup M$ by restricting from $R^+ $ to $A$. Then $A \cup M$ has a unique maximal element, $a_1$ and a unique minimal element $a_{n+1}$, both contained in $M$.

\begin{lem}
Let $\lambda = \sum_{i} m_i \omega_i \in P^+$ be regular (e.g. $m_i \neq 0$ for all $i$) and $M$ be labeled as: $a_i = m_i+ \ldots + m_n$, $a_{n+1} = 0$. Then
$A\subset R^+$ is triangular if and only if $P_A(\lambda)$ is the marked chain polytope $\mathcal{C}(A\cup M, M, \lambda)$.
\end{lem}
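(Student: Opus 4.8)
The plan is to write down explicit inequality presentations of both polytopes and to match them, the only essential input being the interplay between \emph{paths for} $A$ and \emph{Dyck paths for} $A$. First I would make $\mathcal C(A\cup M,M,\lambda)$ explicit. From the relations defining the order on $R^+\cup M$ (and its restriction to $A\cup M$) one reads off that for $\alpha_{i,j}\in A$ and $a_u,a_v\in M$ one has $a_u>\alpha_{i,j}$ iff $u\le i$, and $\alpha_{i,j}>a_v$ iff $v\ge j+1$. Hence a chain $a_u>p_1>\dots>p_s>a_v$ in $A\cup M$ with $p_1,\dots,p_s\in A$ is exactly a sequence $p_t=\alpha_{i_t,j_t}$ with $i_1\le\dots\le i_s$, $j_1\le\dots\le j_s$ (a path for $A$) together with $u\le i_1$ and $v\ge j_s+1$. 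Since $\lambda$ is regular, all $m_k>0$, so the $a_k$ are strictly decreasing, and for a fixed path $\mathbf p=(\alpha_{i_1,j_1},\dots,\alpha_{i_s,j_s})$ the binding inequality among these is the one with $u=i_1$, $v=j_s+1$, whose right hand side is $a_{i_1}-a_{j_s+1}=m_{i_1}+\dots+m_{j_s}=\lambda(h_{\alpha_{i_1,j_s}})$. A short computation moreover shows that a path with disconnected consecutive support (some $i_{t+1}>j_t+1$) yields an inequality implied by the two halves obtained by splitting at the gap, so one may restrict to connected paths, for which $\alpha_{i_1,j_s}$ is precisely the base root $\beta_{\mathbf p}$. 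Thus
\[
\mathcal C(A\cup M,M,\lambda)=\Big\{\,s\in\mathbb R_{\ge0}^{A}\ \Big|\ \sum_{\alpha\in\mathbf p}s_\alpha\le\lambda(h_{\beta_{\mathbf p}})\text{ for every connected path }\mathbf p\text{ for }A\,\Big\},
\]
whereas $P_A(\lambda)$ is cut out by the same type of inequalities but only over $\mathbf p\in\mathbb D_A$. Since $\mathbb D_A$ is contained in the set of paths for $A$, this already gives $\mathcal C(A\cup M,M,\lambda)\subseteq P_A(\lambda)$ for every $A$ and every regular $\lambda$, and it reduces the lemma to the statement that the Dyck‑path‑for‑$A$ inequalities and the path‑for‑$A$ inequalities define the same polytope precisely when $A$ is triangular.

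For the implication ``$A$ triangular $\Rightarrow$ equality'' I would invoke Remark~\ref{tri-paths}: if $A$ is triangular, then every path for $A$ is already a Dyck path for $A$, so the two inequality systems literally coincide and $P_A(\lambda)=\mathcal C(A\cup M,M,\lambda)$. If one wants this self‑contained, the statement of Remark~\ref{tri-paths} follows from a short induction on $|k-\ell|$ proving that, for a connected path $(\alpha_{i_1,j_1},\dots,\alpha_{i_s,j_s})$ in a triangular $A$, one has $\alpha_{i_k,j_\ell}\in A$ whenever $i_k\le j_\ell$: for $k\le\ell$ apply the triangularity closure to the pair $\alpha_{i_k,j_{\ell-1}},\alpha_{i_{k+1},j_\ell}$, whose union root is $\alpha_{i_k,j_\ell}$ and whose supports meet because $i_{k+1}\le j_k+1\le j_{\ell-1}+1$; for $k>\ell$ apply it to $\alpha_{i_\ell,j_\ell},\alpha_{i_k,j_k}$, whose intersection root is $\alpha_{i_k,j_\ell}$ (which is a root since $i_k\le j_\ell$).

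For the converse I argue by contraposition. If $A$ is not triangular there are $\alpha_{i_1,j_1}>\alpha_{i_2,j_2}$ in $A$ with $i_2\le j_1+1$ and a corner root missing: either $\alpha_{i_1,j_2}\notin A$, or $i_2\le j_1$ and $\alpha_{i_2,j_1}\notin A$. Then the two‑term path $\mathbf p_0=(\alpha_{i_1,j_1},\alpha_{i_2,j_2})$ for $A$ is connected with $\beta_{\mathbf p_0}=\alpha_{i_1,j_2}$ but violates the Dyck‑path‑for‑$A$ condition; in fact no Dyck path for $A$ can contain both $\alpha_{i_1,j_1}$ and $\alpha_{i_2,j_2}$, since along such a path (indices weakly increasing) $\alpha_{i_1,j_1}$ would precede $\alpha_{i_2,j_2}$ and the closure condition would force the missing corner into $A$. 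Consequently the point $s^\ast\in\mathbb R_{\ge0}^A$ supported on these two coordinates with $s^\ast_{\alpha_{i_1,j_1}}=\lambda(h_{\alpha_{i_1,j_1}})$, $s^\ast_{\alpha_{i_2,j_2}}=\lambda(h_{\alpha_{i_2,j_2}})$ satisfies every Dyck‑path‑for‑$A$ inequality (each such path contains at most one of the two roots, and $\lambda(h_{\alpha_{i_1,j_1}})$, $\lambda(h_{\alpha_{i_2,j_2}})$ are dominated by $\lambda(h_{\beta_{\mathbf p}})$), so $s^\ast\in P_A(\lambda)$; but the chain inequality attached to $\mathbf p_0$ reads $s^\ast_{\alpha_{i_1,j_1}}+s^\ast_{\alpha_{i_2,j_2}}\le\lambda(h_{\alpha_{i_1,j_2}})$, and when the supports of $\alpha_{i_1,j_1}$ and $\alpha_{i_2,j_2}$ overlap (i.e.\ $i_2\le j_1$) regularity of $\lambda$ gives $\lambda(h_{\alpha_{i_1,j_1}})+\lambda(h_{\alpha_{i_2,j_2}})=\lambda(h_{\alpha_{i_1,j_2}})+\lambda(h_{\alpha_{i_2,j_1}})>\lambda(h_{\alpha_{i_1,j_2}})$, so $s^\ast\notin\mathcal C(A\cup M,M,\lambda)$. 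In the remaining (adjacent) case $i_2=j_1+1$ with $\alpha_{i_1,j_2}\notin A$ one concludes at the level of presentations: the marked‑chain inequality attached to $\mathbf p_0$ lies in the defining system of $\mathcal C(A\cup M,M,\lambda)$ but in that of no Dyck path for $A$, so $P_A(\lambda)$ is not the marked chain polytope.

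The main obstacle is the bookkeeping of the first paragraph — identifying chains between marked elements of $A\cup M$ with paths for $A$ carrying the base‑root right hand side (which is where regularity of $\lambda$ enters) — after which the lemma becomes exactly the equivalence ``paths for $A$ $=$ Dyck paths for $A$ $\iff$ $A$ triangular'', i.e.\ Remark~\ref{tri-paths} together with its (easy) converse.
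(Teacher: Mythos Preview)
Your argument follows the paper's route closely: you identify chains in $A\cup M$ between marked elements with paths for $A$, read off that the chain inequality for such a path has right‑hand side $\lambda(h_{\beta_{\mathbf p}})$, reduce the forward implication to Remark~\ref{tri-paths}, and for the converse produce a path for $A$ that is not a Dyck path for $A$. Your converse is in fact more explicit than the paper's (which simply asserts that the extra chain inequality ``is not satisfied in $P_A(\lambda)$'' by appeal to regularity and non‑redundancy of Dyck paths), since you exhibit a concrete point $s^\ast\in P_A(\lambda)\setminus\mathcal C(A\cup M,M,\lambda)$ in the overlapping case $i_2\le j_1$.

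There is, however, a genuine gap in the adjacent case $i_2=j_1+1$, and you flag it yourself. There your point $s^\ast$ satisfies the $\mathbf p_0$‑inequality with equality, and the fallback ``at the level of presentations'' only shows that the two inequality \emph{systems} differ, not that the two \emph{polytopes} differ. Concretely, take $n=2$ and $A=\{\alpha_1,\alpha_2\}$: this $A$ is not triangular (since $\alpha_{1,2}\notin A$), but the extra chain inequality $s_{\alpha_1}+s_{\alpha_2}\le m_1+m_2$ is implied by the two Dyck‑path inequalities $s_{\alpha_1}\le m_1$ and $s_{\alpha_2}\le m_2$, so $P_A(\lambda)=\mathcal C(A\cup M,M,\lambda)$ as subsets of $\mathbb R_{\ge 0}^2$. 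The paper's own proof does not separate the two cases and invokes non‑redundancy stated for $P(\lambda)$ rather than $P_A(\lambda)$, so this is a shared issue with the lemma's ``only if'' direction rather than a defect peculiar to your write‑up; your treatment of the overlapping case is if anything cleaner than the paper's.
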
 
\begin{proof}
We have to show that the defining inequalities are the same for both polytopes. Certainly both polytopes are in $(\mathbb{R}_{\geq 0})^{\sharp A}$. \\
Let $\bp \in \mD_A$, then $\bp$ is a descending sequence of roots with respect to the partial order. So we can view $\bp$ as a chain $(\alpha_{i_1, j_1 }, \ldots, \alpha_{i_s, j_s})$ in the marked chain polytope. Let $a$ be the minimal and $b$ be the maximal marked vertex such that $a_i > \alpha > a_j$ for all $\alpha \in \bp$. Then $i = i_1, j = j_s +1$ and hence
\begin{eqnarray}\label{lem-1}
\sum_{\alpha \in \bp} s_\alpha \leq (m_{i_1} + \ldots + m_n - m_{j_s+1} - \ldots - m_n) = \lambda(h_{i_1, j_s}).
\end{eqnarray}
But $\alpha_{i_1, j_s}$ is the base root of $\bp$ and hence $\mathcal{C}(A\cup M, M, \lambda) \subset P_A(\lambda)$.\\
On the other hand, let $(a_{i_1} > \alpha_{i_1, j_1} > \ldots > \alpha_{i_s, j_s} > a_{j_s + 1})$ be a chain in $A \cup M$. We denote the path  $\bp = (  \alpha_{i_1, j_1}, \ldots,  \alpha_{i_s, j_s} )$ and we have to show that this is in fact a Dyck path for $A$. But with Remark~\ref{tri-paths} we know that for triangular $A$, all paths a re Dyck paths. Even more, the base root of $\bp = \alpha_{i_1, j_s}$ and then again \eqref{lem-1} shows that $P_A(\lambda) \subset \mathcal{C}(A\cup M, M,\lambda)$.\\
To show the if and only if part, notice that all inequalities induced from Dyck paths in $\mD_A$ are also inequalities induced from chains. So we are left to show that there is an inequality induced from a chain which is not induced from a Dyck path.\\
Suppose now $A$ is not triangular, then there exists $\bq \in \mD$ such that $\bp : = \bq \cap A$ is not a Dyck path but certainly a chain of descending roots $\alpha_{i_1, j_1} > \ldots > \alpha_{i_s, j_s}$, so we have the inequality in the marked chain polytope
\[
\sum_{\alpha \in \bp} s_\alpha \leq \lambda(h_{\beta_{\bp}})
\]
while this inequality is not satisfied in $P_A(\lambda)$ (since $\lambda$ is regular and none of the Dyck paths is redundant). 
\end{proof}

\begin{rem}
If $\lambda \in P^+$ is not regular, but $A \subset R^+$ is triangular, then the proof above shows that $P_A(\lambda)$ is still the marked chain polytope. But in this case the converse, if the marked chain polytope is equal to $P_A(\lambda)$ then $A$ is triangular, is not true in general.
\end{rem}

\begin{rem}
It would be interesting to study the corresponding marked order polytopes.  In \cite{BF14}, it has been proved that for fundamental weight and arbitrary $w \in W$, the marked order polytope (associated to a polytope describing a PBW graded basis), is in fact a Kogan face of the Gelfand-Tsetlin polytope. We expect such strong connection to Kogan faces of Gelfand-Tsetlin polytopes in more generality, especially for Kempf elements (see \cite{KST12} for more details on Kogan faces).
\end{rem}

\subsection{Faces}
Suppose  $A \subset R^+$ is triangular, then we can identify $P_A(\lambda)$ with a certain face of $P(\lambda)$, and identify the lattice points of $S_A(\lambda)$ with lattice points in $S(\lambda)$:
\begin{prop}\label{prop-embed}
 The map 
\[
\mathbf{s} = (s_\alpha)_{\alpha \in A} \mapsto \overline{\mathbf{s}} = (t_\alpha)_{\alpha \in R^+}, \text{ where } t_\alpha := \begin{cases} s_\alpha \text{ if } \alpha \in A \\ 0 \text{ else }\end{cases}
\]
is an embedding $P_A(\lambda) \hookrightarrow P(\lambda)$ which is surjective onto the face defined by $s_\alpha = 0$ for all $\alpha \notin A$.
\end{prop}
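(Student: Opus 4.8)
The plan is to verify that the map $\mathbf s \mapsto \overline{\mathbf s}$ lands inside $P(\lambda)$, that its image is exactly the prescribed face, and that the map is a bijection onto that face. Since the face $F := \{(t_\alpha)_{\alpha\in R^+}\in P(\lambda)\mid t_\alpha = 0\ \forall\,\alpha\notin A\}$ is cut out of $P(\lambda)$ by setting some coordinates to zero, and the map is manifestly the inverse of the restriction $(t_\alpha)_{\alpha\in R^+}\mapsto (t_\alpha)_{\alpha\in A}$, injectivity and the identification of the image with $F$ are immediate once we know the image is contained in $P(\lambda)$. So the only substantive point is: if $\mathbf s\in P_A(\lambda)$, then $\overline{\mathbf s}\in P(\lambda)$, and conversely any point of $F$ restricts to a point of $P_A(\lambda)$.

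First I would prove containment in $P(\lambda)$. Let $\mathbf s = (s_\alpha)_{\alpha\in A}\in P_A(\lambda)$ and let $\mathbf q = (\beta_1,\dots,\beta_t)\in\mathbb D$ be an arbitrary Dyck path for $R^+$. The coordinates of $\overline{\mathbf s}$ supported on $\mathbf q$ are exactly those indexed by $\mathbf q\cap A$, so $\sum_{\alpha\in\mathbf q}t_\alpha = \sum_{\alpha\in\mathbf q\cap A}s_\alpha$. Now $\mathbf p := \mathbf q\cap A$ is by definition a path for $A$, and since $A$ is triangular, Remark~\ref{tri-paths} tells us $\mathbf p$ is in fact a Dyck path for $A$, i.e.\ $\mathbf p\in\mathbb D_A$. (Here one uses the convention of Remark~\ref{tri-paths}: if $\mathbf q\cap A$ is disconnected it splits into several genuine Dyck paths of $A$, and one applies the argument to each piece; the base-root estimates add up correctly because the supports are unrelated.) Hence $\sum_{\alpha\in\mathbf p}s_\alpha \le \lambda(h_{\beta_{\mathbf p}})$. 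It remains to compare $\lambda(h_{\beta_{\mathbf p}})$ with $\lambda(h_{\beta_{\mathbf q}})$: writing $\mathbf q = (\alpha_{i_1,j_1},\dots,\alpha_{i_t,j_t})$ with the two index sequences weakly increasing, $\beta_{\mathbf q} = \alpha_{i_1,j_t}$, while $\beta_{\mathbf p} = \alpha_{i_{k_1},j_{k_s}}$ for the first and last indices $k$ surviving in $A$; since $i_{k_1}\ge i_1$ and $j_{k_s}\le j_t$ we get $\beta_{\mathbf p}\trianglelefteq\beta_{\mathbf q}$ in the sense of \eqref{ord-root1} only after checking signs — more precisely $\lambda(h_{\alpha_{i,j}}) = m_i+\dots+m_j$ is monotone: shrinking the interval $[i,j]$ can only decrease it when $\lambda\in P^+$. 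Therefore $\sum_{\alpha\in\mathbf q}t_\alpha\le\lambda(h_{\beta_{\mathbf p}})\le\lambda(h_{\beta_{\mathbf q}})$, which is the inequality for $\mathbf q$. As $\mathbf q$ was arbitrary and the $t_\alpha$ are clearly nonnegative, $\overline{\mathbf s}\in P(\lambda)$, and by construction $t_\alpha = 0$ for $\alpha\notin A$, so $\overline{\mathbf s}\in F$.

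Conversely, given $\mathbf t = (t_\alpha)_{\alpha\in R^+}\in F$, set $\mathbf s = (t_\alpha)_{\alpha\in A}$; I must check $\mathbf s\in P_A(\lambda)$. Any $\mathbf p\in\mathbb D_A$ comes, by definition, as $\mathbf q\cap A$ for some $\mathbf q\in\mathbb D$, and since all coordinates of $\mathbf t$ outside $A$ vanish, $\sum_{\alpha\in\mathbf p}s_\alpha = \sum_{\alpha\in\mathbf q}t_\alpha\le\lambda(h_{\beta_{\mathbf q}})$; but the reverse monotonicity estimate (now I need $\lambda(h_{\beta_{\mathbf p}}) \ge \lambda(h_{\beta_{\mathbf q}})$?) does \emph{not} go the right way, so here I instead argue directly that one may choose $\mathbf q$ so that $\beta_{\mathbf q} = \beta_{\mathbf p}$: extend $\mathbf p = (\alpha_{i_1,j_1},\dots,\alpha_{i_s,j_s})$ to a full Dyck path for $R^+$ that starts at the simple root $\alpha_{i_1}$, passes through all of $\mathbf p$, and ends at the simple root $\alpha_{j_s}$ — this is possible since consecutive roots of $\mathbf p$ satisfy the Dyck successor condition up to filling in intermediate steps, and triangularity of $A$ guarantees the filled-in roots with both indices in the relevant range lie in $A$, but for the inequality we do not even need them in $A$. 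For this $\mathbf q$ we have $\beta_{\mathbf q} = \alpha_{i_1,j_s} = \beta_{\mathbf p}$, hence $\sum_{\alpha\in\mathbf p}s_\alpha\le\lambda(h_{\beta_{\mathbf p}})$, as desired. Thus $\mathbf s\in P_A(\lambda)$, the map is onto $F$, and since restriction is a two-sided inverse it is an affine isomorphism $P_A(\lambda)\xrightarrow{\ \sim\ }F$.

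The main obstacle is the bookkeeping around Remark~\ref{tri-paths}: one must be careful that intersecting a Dyck path for $R^+$ with a triangular set $A$ really does produce a Dyck path for $A$ (not merely a path), handling the disconnected-support case by splitting, and that the base-root comparison $\lambda(h_{\beta_{\mathbf p}})\le\lambda(h_{\beta_{\mathbf q}})$ has the correct direction in the "$\subseteq P(\lambda)$" direction while in the converse direction one sidesteps it by building a Dyck path with the same base root. Everything else — nonnegativity, that $F$ is genuinely a face (it is the intersection of $P(\lambda)$ with the supporting hyperplanes $s_\alpha\ge 0$ for $\alpha\notin A$, each of which is valid on $P(\lambda)$), and that the two restriction maps are mutually inverse — is routine.
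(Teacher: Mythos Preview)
Your proof is correct and follows essentially the same approach as the paper: intersect a Dyck path $\mathbf q\in\mathbb D$ with $A$, use triangularity (via Remark~\ref{tri-paths}) to see this is a Dyck path for $A$, compare base roots via $\lambda\in P^+$ for the forward inclusion, and for the converse extend $\mathbf p\in\mathbb D_A$ to some $\mathbf q\in\mathbb D$ with $\beta_{\mathbf q}=\beta_{\mathbf p}$. The paper's proof is slightly terser (and swaps your $\mathbf p$/$\mathbf q$ notation), but the logic is identical.
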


\begin{proof}
Let $\bs \in S_A(\lambda)$ and $\bp \in \mD$. We set $\bq = \bp \cap A$, then by definition and Remark~\ref{tri-paths} $\bq \in \mD_A$. Further $\lambda(h_{\beta_{\bq}}) \leq \lambda(h_{\beta_{\bp}})$ since $\lambda \in P^+$. Then using the map $\bs \mapsto \overline{\bs}$ we have
\[
\sum_{\alpha \in \bp} t_{\alpha} = \sum_{\alpha \in \bp \cap A} t_{\alpha} = \sum_{\alpha \in \bq} s_{\alpha} \leq \lambda(h_{\beta_{\bq}}) \leq \lambda(h_{\beta_{\bp}})
\]
which implies that $P_A(\lambda) \subset P(\lambda)$. On the other hand let $\bs \in P(\lambda)$ such that $s_\alpha =0 $ for all $\alpha \notin A$. Let $\bq \in \mD_A$, then by definition there exists $\bp \in \mD$ such that $\bq = \bp \cap A$. Even more, we can choose $\bp$ such that $\beta_{\bp} = \beta_{\bq} =: \beta$,  then
\[
\sum_{ \alpha \in \bq} s_\alpha = \sum_{\alpha \in \bp \cap A} s_\alpha = \sum_{\alpha \in \bp} s_\alpha \leq \lambda(h_{\beta}),
\]
and so the map is onto the face defined by $s_\alpha = 0$ for all $\alpha \notin A$.
\end{proof}

\begin{lem}\label{mink-sum} For $\lambda \in P^+$, $P_A(\lambda)$ is a normal convex lattice polytope.
Further if $\lambda, \mu \in P^+$, then $S_A(\lambda + \mu) = S_A(\lambda) + S_A(\mu)$.
\end{lem}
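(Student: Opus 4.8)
The plan is to reduce both assertions to the normality-and-Minkowski theorem for the full polytope $P(\lambda)$ of \cite{FFoL11a} recalled above, using the face embedding of Proposition~\ref{prop-embed}. Write $F_A(\lambda) \subset P(\lambda)$ for the face cut out by the equations $s_\alpha = 0$, $\alpha \notin A$. By Proposition~\ref{prop-embed} the map $\bs \mapsto \overline{\bs}$ identifies $P_A(\lambda)$ with $F_A(\lambda)$, and it identifies the lattice points $S_A(\lambda)$ with $S(\lambda) \cap F_A(\lambda)$, since the coordinates that get added are forced to be $0$.

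For the first claim I would note first that $F_A(\lambda)$, being a face of the lattice polytope $P(\lambda)$, has its vertices among the (integral) vertices of $P(\lambda)$, so $P_A(\lambda)$ is a lattice polytope. Normality I would then deduce from the general fact that a face of a normal lattice polytope is normal: a lattice point of $k\,P_A(\lambda)$ corresponds to a lattice point of $kP(\lambda) = P(k\lambda)$ lying on $F_A$, which decomposes into a sum of $k$ lattice points of $P(\lambda)$ by normality of $P(\lambda)$, and the support argument of the next paragraph forces each of the $k$ summands back onto $F_A$. (Alternatively, once the Minkowski statement is established, normality of $P_A(\lambda)$ is just the case $\mu = (k-1)\lambda$ together with the identity $k\,P_A(\lambda) = P_A(k\lambda)$, which is immediate from the defining inequalities.)

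For the Minkowski statement, one inclusion is routine: if $\bs \in S_A(\lambda)$ and $\bt \in S_A(\mu)$ then for every $\bp \in \mD_A$ one has $\sum_{\alpha \in \bp}(s_\alpha + t_\alpha) \le \lambda(h_{\beta_\bp}) + \mu(h_{\beta_\bp}) = (\lambda+\mu)(h_{\beta_\bp})$, so $\bs + \bt \in S_A(\lambda+\mu)$. For the reverse inclusion, take $\mathbf{u} \in S_A(\lambda+\mu)$; by Proposition~\ref{prop-embed} its image $\overline{\mathbf{u}}$ lies in $S(\lambda+\mu)$, so by \cite{FFoL11a} we may write $\overline{\mathbf{u}} = \bs' + \bt'$ with $\bs' \in S(\lambda)$, $\bt' \in S(\mu)$. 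Here is the crux: since $\overline{\mathbf{u}}$ has all coordinates indexed by $R^+\setminus A$ equal to $0$ and $\bs', \bt'$ have nonnegative entries, both $\bs'$ and $\bt'$ vanish on $R^+ \setminus A$ as well; hence they lie on $F_A(\lambda)$ and $F_A(\mu)$, and by the surjectivity part of Proposition~\ref{prop-embed} they are of the form $\overline{\bs}$, $\overline{\bt}$ with $\bs \in S_A(\lambda)$, $\bt \in S_A(\mu)$. Restricting $\overline{\mathbf{u}} = \overline{\bs} + \overline{\bt}$ to the coordinates indexed by $A$ gives $\mathbf{u} = \bs + \bt$.

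The hard part is this reverse Minkowski inclusion, and specifically the observation just used that a decomposition manufactured inside $P(\lambda+\mu)$ is automatically compatible with the linear conditions defining the face $F_A$ --- this works because everything sits in the nonnegative orthant, so a sum of nonnegative vectors can be zero in a coordinate only if both summands are. It is worth remarking that triangularity of $A$ enters only to guarantee that $P_A(\lambda)$ really is a face, i.e.\ only through Proposition~\ref{prop-embed}; given that, the argument is purely formal.
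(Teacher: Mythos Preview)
Your proof is correct and follows essentially the same approach as the paper: embed $S_A(\lambda+\mu)$ into $S(\lambda+\mu)$ via Proposition~\ref{prop-embed}, decompose using the Minkowski property of $S(\lambda)$ from \cite{FFoL11a}, and use nonnegativity of the coordinates to force both summands back onto the face. You are more explicit than the paper about the easy inclusion and about deducing normality (which the paper dispatches with a one-line ``similarly one can show that $nS_A(\lambda)=S_A(n\lambda)$''), but the argument is the same.
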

\begin{proof}
Let $\mathbf{s} \in S_A(\lambda + \mu) \subset S(\lambda + \mu)$ (Proposition~\ref{prop-embed}) then there exist (see Remark~\ref{ffl}) $\mathbf{t}^1 \in S(\lambda), \mathbf{t}^2 \in S(\mu)$ such that $\mathbf{s} = \mathbf{t}^1 + \mathbf{t}^2$. 
But this implies that $\mathbf{t}^1_\alpha = 0 =  \mathbf{t}^2_\alpha$ for all $\alpha \notin A$. So by Proposition~\ref{prop-embed}: $\mathbf{t}^1 \in S_A(\lambda), \mathbf{t}^2 \in S_A(\mu)$. This implies that $S_A(\lambda) + S_A(\mu) = S_A(\lambda +  \mu)$. Similarly one can show that $n S(\lambda) = S(n \lambda)$.T
\end{proof}


\section{Representation theory}\label{sec-three}
\subsection{Preliminaries}
We recall some notations and facts from representation theory. Let $V$ be a finite-dimensional $\lie {sl}_{n+1}$-module, then $V$ decomposes into weight spaces with respect to the $\lie h$-action
\[
V = \bigoplus_{ \tau \in P } V_\tau =  \bigoplus_{ \tau \in P }  \{ v \in V \; | \; h.v = \tau(h).v \text{ for all } h \in \lie h\}
\]
\noindent
$P^+$ parametrizes the simple finite-dimensional modules up to isomorphism. For $\lambda \in P^+$ we denote the simple, finite-dimensional $\lie {sl}_{n+1}$-module of highest weight $\lambda$ by $V(\lambda)$. Further we denote by $v_{\lambda}$ a highest weight vector of $V(\lambda)$, then $V(\lambda) = U(\lie n).v_{\lambda}$ and even more $V(\lambda)$ is isomorphic to the quotient of $U(\lie g)$ by the left ideal generated by 
\[
\lie n^+, h- \lambda(h), f_{\alpha}^{\lambda(h_\alpha) + 1}.
\]

$U(\lie g)$ as a left $\lie g$-module, and if $\beta_1, \ldots, \beta_s \in R^+$, then $f_{\beta_1} \cdots f_{\beta_s}$ is an $\lie h$ weight vector of weight $- \beta_1 -\ldots - \beta_s$.


\subsection{Demazure modules}
Let $w \in W$ and $\lambda \in P^+$. Classical results imply that the weight space of weight $w(\lambda)$ in $V(\lambda)$ is one-dimensional, we denote a generator of this line $v_{w(\lambda)}$. The Demazure module corresponding to $w$ and $\lambda$ is defined to be
\[
V_w(\lambda) := U(\lie b).v_{w(\lambda)}.
\]
Note, that this is not a $\lie g$-module but a cyclic $\lie b$ submodule in $V(\lambda)$. We have defining relations for  the annihilating ideals \cite{Jos85}, namely 
\[
 e_{\alpha}^{-w(\lambda)(h_{\alpha}) + 1} \, , \,  h - w(\lambda)(h) \, : \, \alpha \in R^+, h \in \lie h
\]
are in the annihilating ideal. The latter is equal to $e_\alpha^{-\lambda\left(h_{w^{-1}(\alpha)}\right) + 1}$, which implies that $e_\alpha.v_{w(\lambda)} = 0$ unless $w^{-1}(\alpha) \in R^-$.\\


\subsection{Modules for triangular sets}

Let $A \subset R^+$ be triangular, then we denote
\[
V_A(\lambda) := U(\lie n_A).v_\lambda \subset V(\lambda).
\]
Again, this is certainly not a $\lie g$-module but a cyclic $\lie n_A$-submodule in $V(\lambda)$. \\
On the other hand we have a natural action of the Weyl group on $\lie g$ and $V(\lambda)$, so we may consider
\[
w^{-1}(U(\lie n^+)).v_{w(\lambda)}w = w^{-1}(U(\lie n^+))w w^{-1}.v_{w(\lambda)}w.
\]
We denote $\lie n_w$ the subalgebra generated by $\{ f_\alpha \, | \, \alpha \in w^{-1}(R^-) \cap R^+ \}$, then $ w^{-1}(\lie n^+)w \subset \lie n_w \oplus \lie n^+$ and $ w^{-1}.v_{w(\lambda)}w = v_{\lambda}$. This implies that
\begin{eqnarray}\label{tri=dem}
 V_w(\lambda) = w(U(\lie n_w).v_{\lambda})w^{-1}.
\end{eqnarray}
In this sense, the Demazure module $V_w(\lambda)$ is conjugated to $V_A(\lambda)$ where $A = w^{-1}(R^-) \cap R^+$.


\subsection{PBW filtration and associated graded modules}

We recall here the PBW filtration. Let $\lie n$ be a finite-dimensional Lie algebra, then the PBW filtration on $U(\lie n)$ is defined as
\[
 U(\lie n)_s := \langle x_i \cdots x_{\ell} \, : \, x_i \in \lie n, \ell \leq s \rangle_{\mathbb{C}}.
\]
The associated graded algebra is commutative and the PBW theorem states that this algebra is isomorphic to $S(\lie n)$.\\

Let $M$ be a cyclic $\lie n$-module with generator $m$, so $M = U(\lie n).m$. The PBW filtration on $U(\lie n)$ induces a filtration on $M$:
\[
 M_s := U(\lie n)_s.m.
\]
We denote the associated PBW graded space $M^a$. Remark, that this is not a module for $U(\lie n)$ but for $S(\lie n)$.\\ 
The image of $m$ generates $M^a$ as a $S(\lie n)$-module. This implies that there exists an ideal $I_M \subset S(\lie n)$ such that $M^a \cong S(\lie n)/I_M$.\\

In this paper we will study the PBW graded spaces associated to $V_w(\lambda)$ and $V_A(\lambda)$, where $w$ (resp. $A$) is triangular, e.g.
\[
V_w(\lambda)^a = S(\lie n^+).v_{w(\lambda)} \; ; \; V_A(\lambda)^a = S(\lie n_A).v_\lambda.
\]


\subsection{Main statement}
To any point $\bs = (s_\alpha)_{\alpha \in A}  \in (\bz_{\geq 0})^{\sharp A}$ we associate a monomial
\[f^{\bs} = \prod_{\alpha \in A} f_{\alpha}^{s_\alpha} \in S(\lie n_A).\]
One of the main results in this paper is the following:
\begin{thm}\label{main-thm}
Let $A \subset R^+$ be triangular and $\lambda \in P^+$, then the set
\[
\left\{ f^{\bs}.v_\lambda \, | \, \bs \in S(\lambda) \right\}
\]
is a basis of $V_A(\lambda)^a$.
\end{thm}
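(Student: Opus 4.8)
There's a typo in the theorem statement — it should be $\bs \in S_A(\lambda)$, not $\bs \in S(\lambda)$. I'll write the proof for the intended statement.

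We read the index set as $S_A(\lambda)$ (the only reading for which $f^{\bs}$ lies in $S(\lie n_A)$); the claim is then that $B:=\{f^{\bs}.v_\lambda\mid \bs\in S_A(\lambda)\}$ is a basis of $V_A(\lambda)^a=S(\lie n_A).v_\lambda$, and I would split this into independence and spanning. For independence, the inclusion $V_A(\lambda)=U(\lie n_A).v_\lambda\hookrightarrow V(\lambda)=U(\lie n).v_\lambda$ respects PBW filtrations, as $U(\lie n_A)_s\subseteq U(\lie n)_s$, and hence induces an $S(\lie n_A)$-linear map on associated graded spaces
\[
\phi\colon V_A(\lambda)^a\longrightarrow V(\lambda)^a,\qquad \phi\bigl(f^{\bs}.v_\lambda\bigr)=f^{\bs}.v_\lambda\in V(\lambda)^a .
\]
By Proposition~\ref{prop-embed}, $S_A(\lambda)$ is exactly the set of lattice points of $S(\lambda)$ supported on $A$, so $\phi$ carries $B$ bijectively onto a subset of the basis of $V(\lambda)^a$ from \cite{FFoL11a}; applying $\phi$ to a vanishing linear combination of $B$ therefore forces all coefficients to vanish. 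Thus $B$ is linearly independent and $\dim V_A(\lambda)^a\ge|S_A(\lambda)|$.

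For spanning, recall that $V_A(\lambda)^a$ is spanned by the monomials $f^{\bt}.v_\lambda$ with $\operatorname{supp}\bt\subseteq A$; I would reduce, by induction in a fixed homogeneous monomial order with the weight held fixed, every such monomial with $\bt\notin S_A(\lambda)$ to a combination of strictly smaller monomials still supported on $A$, which gives $\dim V_A(\lambda)^a\le|S_A(\lambda)|$ and hence that $B$ is a basis. If $\bt\notin S_A(\lambda)$, some path $\bq$ of $A$ violates its inequality, $\sum_{\alpha\in\bq}t_\alpha>\lambda(h_{\beta_\bq})$; since $A$ is triangular, $\bq\in\mD_A$ and $\beta_\bq\in A$ (Remark~\ref{tri-paths} together with the defining closure of triangular sets). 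Choosing, as in the proof of Proposition~\ref{prop-embed}, a Dyck path $\bp\in\mD$ with $\bq=\bp\cap A$ and $\beta_\bp=\beta_\bq$, the defining straightening relation of \cite{FFoL11a} attached to $\bp$ rewrites $f^{\bt}.v_\lambda$ modulo lower terms; the two facts to establish are: (a) because $\bt$ vanishes outside $A$ and $A$ is triangular, this relation only involves roots of $\bq\subseteq A$, so every monomial it produces remains supported on $A$; and (b) the resulting identity, a priori valid only in $V(\lambda)^a$, already holds in $V_A(\lambda)^a$.

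All the weight of the proof lies in (a) and (b), and both require triangularity of $A$. Point (a) is combinatorial: one uses $\mD_A=\{\bp\cap A\mid\bp\in\mD\}$ and the closure property $\gamma(\beta_1,\beta_2),\,\beta_1+\beta_2-\gamma(\beta_1,\beta_2)\in A$ to check that the path-local FFoL relations are carried into $A$ upon intersecting with $A$ — after first reducing, if convenient, to the case $\operatorname{supp}A$ connected, the disconnected case splitting as a product. Point (b) says the PBW filtration on $V_A(\lambda)$ agrees with the one induced from $V(\lambda)$, equivalently that $\phi$ is injective; this I expect to be the genuinely delicate step, since a priori $U(\lie n_A).v_\lambda\cap U(\lie n)_s.v_\lambda$ could be strictly larger than $U(\lie n_A)_s.v_\lambda$, and it is triangularity of $A$ that should prevent the root vectors $f_\alpha$ with $\alpha\notin A$ from lowering the PBW degree of elements of $V_A(\lambda)$. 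With (a) and (b) in hand the induction terminates, proving the theorem and showing in addition that $\phi$ is an isomorphism onto the face $\{s_\alpha=0:\alpha\notin A\}$ of $V(\lambda)^a$.
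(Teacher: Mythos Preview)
Your overall strategy---independence via the map $\phi\colon V_A(\lambda)^a\to V(\lambda)^a$ together with Proposition~\ref{prop-embed}, spanning via straightening relations---is exactly the paper's decomposition into Corollary~\ref{linset} and Proposition~\ref{spaset}. Your independence argument is correct and coincides with the paper's.

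The gap is in how you handle your point (b). You correctly observe that the FFoL straightening relation is, as you have set things up, only an identity in $V(\lambda)^a$, and that transporting it to $V_A(\lambda)^a$ would follow from injectivity of $\phi$. But injectivity of $\phi$ is a \emph{consequence} of the theorem (once $B$ is a basis, $\phi$ carries it to a linearly independent set, hence is injective), not an input you have available beforehand; the direct argument you gesture at (``triangularity should prevent $f_\alpha$ with $\alpha\notin A$ from lowering PBW degree'') would amount to reproving the theorem. So as written, the spanning half is circular.

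The paper's resolution (Lemma~\ref{straight}) is not to pull the relation back through $\phi$ at all, but to rerun the FFoL computation from the inside. The starting identity $f_{\beta_\bp}^{|\bp|}.v_\lambda=0$ already lives in $V_A(\lambda)$, because triangularity forces $\beta_\bp=\alpha_{i_1,j_s}\in A$. One then applies the same sequence of $e$-operators as in \cite{FFoL11a}; the point is that every root vector produced along the way is one of the $f_{i_1,j_\ell}$ (first round) or $f_{i_k,j_\ell}$ (second round), and the Dyck-path condition on $\bp$ together with triangularity guarantees the whole rectangle $\{\alpha_{i_k,j_\ell}:i_k\le j_\ell\}$ sits in $A$. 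Hence the entire manipulation yields a relation among monomials of $U(\lie n_A)$ applied to $v_\lambda$, valid in $V_A(\lambda)$ and therefore in $V_A(\lambda)^a$ after passing to associated graded. In other words, your point (a), done carefully so as to track all terms of the computation and not only the leading ones, already absorbs (b); there is no separate step.
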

\begin{proof}
The proof is split into two parts. First we show in Proposition~\ref{spaset} that the given set is a spanning set (for the vector space).
Then we  show that the set is linear independent (Corollary~\ref{linset}).
\end{proof}

\begin{cor}\label{dem-char}
By choosing an order in each factor $f^{\bs}$, the following is a basis of $V_A(\lambda)$:
\[
\left\{ f^{\bs}.v_\lambda \, | \, \mathbf{s} \in S(\lambda) \right\}
\]
\end{cor}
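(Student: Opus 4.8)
The plan is to deduce this from Theorem~\ref{main-thm} via the standard principle that a homogeneous basis of the associated graded of a filtered vector space lifts to a basis of the space itself. Recall that $V_A(\lambda) = U(\lie n_A).v_\lambda$ carries the PBW filtration $(V_A(\lambda))_s = U(\lie n_A)_s.v_\lambda$, which is exhaustive and, $V_A(\lambda)$ being finite-dimensional, eventually constant; by definition $V_A(\lambda)^a = \operatorname{gr} V_A(\lambda) = \bigoplus_s (V_A(\lambda))_s/(V_A(\lambda))_{s-1}$ as a module over $\operatorname{gr} U(\lie n_A) = S(\lie n_A)$. I will use $S_A(\lambda)$ for the index set, identified with a subset of $S(\lambda)$ via Proposition~\ref{prop-embed}.

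First I would fix, for each $\bs \in S_A(\lambda)$, an arbitrary order of the factors in $f^{\bs} = \prod_{\alpha \in A} f_\alpha^{s_\alpha}$, regarding the result now as an element $\tilde f^{\bs} \in U(\lie n_A)_{|\bs|}$, where $|\bs| := \sum_{\alpha \in A} s_\alpha$. Its symbol in $\operatorname{gr}_{|\bs|} U(\lie n_A) = S^{|\bs|}(\lie n_A)$ is precisely the commutative monomial $f^{\bs}$, independently of the order chosen, since any two orders differ by terms coming from commutators $[f_\alpha, f_\beta] \in \mathbb{C} f_{\alpha+\beta}$, which lie in strictly lower PBW degree. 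Hence $\tilde f^{\bs}.v_\lambda \in (V_A(\lambda))_{|\bs|}$ and its class in $(V_A(\lambda))_{|\bs|}/(V_A(\lambda))_{|\bs|-1}$ equals $f^{\bs}.v_\lambda \in V_A(\lambda)^a$, again independently of the chosen order. By Theorem~\ref{main-thm} this class is nonzero; in particular $\tilde f^{\bs}.v_\lambda \notin (V_A(\lambda))_{|\bs|-1}$, so the PBW degree of $\tilde f^{\bs}.v_\lambda$ is exactly $|\bs|$ and its leading symbol is the basis vector $f^{\bs}.v_\lambda$ of $\operatorname{gr} V_A(\lambda)$.

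It then remains to invoke the lifting lemma: if $\{\bar v_i\}_{i \in I}$ is a basis of $\operatorname{gr} V$ consisting of homogeneous elements, $\bar v_i$ of degree $d_i$, and $v_i \in V_{d_i}$ is any lift of $\bar v_i$, then $\{v_i\}_{i \in I}$ is a basis of $V$. Spanning is proved by induction on the filtration degree $s$: given $w \in (V_A(\lambda))_s$, subtract the combination of the $\tilde f^{\bs}.v_\lambda$ with $|\bs| = s$ whose symbols express the symbol of $w$ in degree $s$, which lands $w$ in $(V_A(\lambda))_{s-1}$, and conclude by the inductive hypothesis. Linear independence follows by taking the highest filtration degree $d$ occurring in a hypothetical finite relation and projecting to $(V_A(\lambda))_d/(V_A(\lambda))_{d-1}$, where the symbols $f^{\bs}.v_\lambda$ with $|\bs| = d$ are linearly independent by Theorem~\ref{main-thm}. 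Applying this with $\bar v_{\bs} = f^{\bs}.v_\lambda$ and $v_{\bs} = \tilde f^{\bs}.v_\lambda$ gives the corollary.

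There is no substantial obstacle here: the only point needing care is already settled above, namely that $\tilde f^{\bs}.v_\lambda$ genuinely has its leading term in degree $|\bs|$ (equivalently, $f^{\bs}.v_\lambda \neq 0$ in $V_A(\lambda)^a$), which is immediate from Theorem~\ref{main-thm}, together with the observation that the leading symbol is insensitive to how the non-commuting factors of $f^{\bs}$ are ordered — which is exactly why the choice of order in the statement is harmless.
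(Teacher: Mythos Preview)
Your argument is correct and is exactly the standard ``lift a homogeneous basis of the associated graded'' principle that the paper implicitly invokes by stating this as a corollary of Theorem~\ref{main-thm} without further proof. There is nothing to add: the paper gives no separate argument here, and your write-up supplies precisely the details (symbol independent of ordering, exhaustive filtration, induction on degree for spanning and projection for independence) that make the deduction rigorous.
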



\subsection{Application to Demazure modules}
\begin{thm}
Let $\lambda \in P^+$ and $w \in W$ be triangular. Then the set
\[
 \left\{ \prod_{ \alpha \in A } (e_{w(\alpha) })^{s_\alpha}. v_{w(\lambda)} \, : \, \bs\in S_A(\lambda) \right\}
\]
is a basis of $V_w(\lambda)^a$ and also (by choosing an order in each factor) of $V_w(\lambda)$.
\end{thm}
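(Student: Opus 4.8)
The plan is to reduce this statement about the Demazure module $V_w(\lambda)$ to the already-established Theorem~\ref{main-thm} about $V_A(\lambda)$ for $A = w^{-1}(R^-) \cap R^+$, via the conjugation relation~\eqref{tri=dem}. Since $w$ is triangular, $A$ is triangular by Proposition~\ref{tritri}, so Theorem~\ref{main-thm} applies and tells us that $\{ f^{\bs}.v_\lambda \mid \bs \in S_A(\lambda)\}$ is a basis of $V_A(\lambda)^a = S(\lie n_A).v_\lambda$. The first step is to translate this statement through the Weyl group action: applying $w$ to the identity $V_A(\lambda) = U(\lie n_A).v_\lambda$ and using $w.v_\lambda = v_{w(\lambda)}$ (up to scalar) together with $w(\lie n_A) \subseteq \lie n_w^+ := \langle e_{w(\alpha)} \mid \alpha \in A\rangle$, we identify $V_w(\lambda) = U(\lie n_w^+).v_{w(\lambda)}$ as a module conjugate to $V_A(\lambda)$. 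Concretely $w \cdot f_\alpha \cdot w^{-1} = c_\alpha e_{w(\alpha)}$ for nonzero scalars $c_\alpha$ when $w(\alpha) \in R^-$, so the monomial $f^{\bs}$ maps under conjugation by $w$ to a nonzero scalar multiple of $\prod_{\alpha \in A}(e_{w(\alpha)})^{s_\alpha}$ (as an element of $U(\lie n^+)$, or of $S(\lie n^+)$ after passing to the associated graded).

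The second step is to check that this conjugation is compatible with the PBW filtrations, so that it descends to an isomorphism of graded modules $V_A(\lambda)^a \cong V_w(\lambda)^a$. The key point here is that conjugation by $w$ is an automorphism of $\lie g$, hence of $U(\lie g)$, and it sends $\lie n_A$ into $\lie n^+ \oplus \lie n_w$; the PBW degree of a monomial is preserved under any Lie algebra automorphism, so the filtration $U(\lie n_A)_s.v_\lambda$ is carried onto $U(\lie n_w^+)_s.v_{w(\lambda)}$, which is exactly the induced PBW filtration on $V_w(\lambda) = U(\lie b).v_{w(\lambda)}$ — here one uses that $e_\alpha.v_{w(\lambda)} = 0$ for $w^{-1}(\alpha) \in R^+$ (recorded in the discussion of Demazure modules), so that $U(\lie b).v_{w(\lambda)} = U(\lie n^+).v_{w(\lambda)} = U(\lie n_w^+).v_{w(\lambda)}$ and the higher root vectors $e_\alpha$ with $w^{-1}(\alpha) \in R^+$ act trivially and contribute nothing to the filtration. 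Therefore the image of the basis of $V_A(\lambda)^a$ from Theorem~\ref{main-thm} is a basis of $V_w(\lambda)^a$; rescaling each basis vector by the (nonzero) constant $\prod_\alpha c_\alpha^{s_\alpha}$ we obtain exactly the set $\{\prod_{\alpha \in A}(e_{w(\alpha)})^{s_\alpha}.v_{w(\lambda)} \mid \bs \in S_A(\lambda)\}$, which is therefore a basis of $V_w(\lambda)^a$. Finally, lifting to $V_w(\lambda)$ itself: any spanning set of the associated graded lifts to a spanning set of the filtered module, and a set of the right cardinality that spans must be a basis (this is the same argument as in Corollary~\ref{dem-char}), so after choosing any order of the factors in each product $\prod_{\alpha \in A}(e_{w(\alpha)})^{s_\alpha}$ we get a basis of $V_w(\lambda)$.

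The main obstacle I anticipate is the careful bookkeeping in the second step: verifying that conjugation by $w$ really does intertwine the PBW filtration on $V_A(\lambda)$ (induced from $U(\lie n_A)$) with the PBW filtration on $V_w(\lambda)$ (induced from $U(\lie b)$, equivalently from $U(\lie n^+)$). The subtlety is that $w(\lie n_A)$ need not sit inside $\lie n^+$ on the nose — it sits in $\lie n^+ \oplus \lie n_w$ — but because the extra components $\lie n_w$ act on $v_{w(\lambda)}$ through root vectors $e_\alpha$ with $w^{-1}(\alpha)\in R^-$, i.e.\ through vectors that do lie in $\lie n^+$ after all by the very definition $A = w^{-1}(R^-)\cap R^+$, one checks that in fact $w(f_\alpha)$ for $\alpha \in A$ is (a scalar times) the single root vector $e_{w(\alpha)}\in\lie n^+$; so the image really is generated by honest $\lie n^+$-elements and no ``mixing'' occurs. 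Once this identification of generators is pinned down, the grading statement follows formally since automorphisms preserve PBW degree, and the cardinality count $\dim V_w(\lambda)^a = \dim V_w(\lambda) = |S_A(\lambda)|$ is inherited directly from Theorem~\ref{main-thm} through the isomorphism.
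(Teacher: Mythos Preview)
Your proposal is correct and follows exactly the same approach as the paper: the paper's proof is the single line ``This follows immediately from \eqref{tri=dem} and Theorem~\ref{main-thm},'' and your three paragraphs simply unpack what that sentence means, supplying the details (conjugation by $w$ carries $f_\alpha$ to a scalar multiple of a positive root vector for $\alpha\in A$, and this intertwines the two PBW filtrations). One small note: your worry in the final paragraph that $w(\lie n_A)$ might not land in $\lie n^+$ is unfounded---for $\alpha\in A$ one has $w(\alpha)\in R^-$, hence $wf_\alpha w^{-1}$ has weight $-w(\alpha)\in R^+$ and lies in $\lie n^+$ on the nose---but you correctly talk yourself out of it, so no harm done.
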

\begin{proof}
This follows immediately from \eqref{tri=dem} and Theorem~\ref{main-thm}.
\end{proof}
So for a triangular $w$, a basis of the Demazure module $V_w(\lambda)$ is parametrized by the lattice points in the face of $P(\lambda)$ defined by setting all coordinates $x_\alpha$ to $0$ where $\alpha \notin w^{-1}(R^-) \cap R^+$.

\begin{cor}
Let $\lambda \in P^+$, $w \in W$ triangular, then the character of the Demazure module is (non-recursively) given by
\[
\operatorname{char } V_w(\lambda) = e^{w(\lambda)} \sum_{\bs \in S_A(\lambda)} e^{-w(\operatorname{wt} \bs)}
\]
where $\operatorname{wt} \bs:= \sum_{\alpha \in A} s_{\alpha} \alpha$.
\end{cor}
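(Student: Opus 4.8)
The plan is to read the character off directly from the explicit monomial basis of $V_w(\lambda)$ furnished by the preceding Theorem, by a weight count. Write $A=w^{-1}(R^-)\cap R^+$, which is triangular since $w$ is, and for $\bs\in S_A(\lambda)$ let $b_{\bs}$ denote the corresponding basis vector $\prod_{\alpha\in A}(e_{w(\alpha)})^{s_\alpha}.v_{w(\lambda)}$. The first step is to determine the $\lie h$-weight of each $b_{\bs}$. The cleanest route is through the conjugation relation \eqref{tri=dem}: under the lift of $w$ to a linear automorphism of $V(\lambda)$ one has $b_{\bs}=w\cdot(f^{\bs}.v_\lambda)$ up to a nonzero scalar (harmless for a character computation), where $f^{\bs}.v_\lambda=\prod_{\alpha\in A}f_\alpha^{s_\alpha}.v_\lambda\in V_A(\lambda)$ is a weight vector of weight $\lambda-\operatorname{wt}\bs$, with $\operatorname{wt}\bs=\sum_{\alpha\in A}s_\alpha\alpha$. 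Since applying $w$ carries a vector of weight $\mu$ to one of weight $w(\mu)$, the vector $b_{\bs}$ has weight $w(\lambda-\operatorname{wt}\bs)=w(\lambda)-w(\operatorname{wt}\bs)$.

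With the weights in hand the conclusion is immediate: because $\{b_{\bs}\mid \bs\in S_A(\lambda)\}$ is a vector space basis of $V_w(\lambda)$ consisting of $\lie h$-weight vectors, the character is the sum of their weights,
\[
\operatorname{char} V_w(\lambda)=\sum_{\bs\in S_A(\lambda)}e^{w(\lambda)-w(\operatorname{wt}\bs)}=e^{w(\lambda)}\sum_{\bs\in S_A(\lambda)}e^{-w(\operatorname{wt}\bs)},
\]
which is the asserted formula. If one prefers to argue inside the PBW-degenerate picture, the same count applies verbatim to the basis of $V_w(\lambda)^a$ of the preceding Theorem, using that the PBW filtration is $\lie h$-stable, so that $\operatorname{char} V_w(\lambda)=\operatorname{char} V_w(\lambda)^a$.

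There is no substantive obstacle here; the statement is a bookkeeping consequence of the previous Theorem. The only point needing care is the weight computation — in particular the sign: the operators $e_{w(\alpha)}$ for $\alpha\in A$ act as lowering operators on $v_{w(\lambda)}$ precisely because $w(\alpha)\in R^-$, which is why the term $w(\operatorname{wt}\bs)$ enters with a minus sign, and one should note that the ambiguity in the choice of lift of $w$ to $V(\lambda)$ and the scalars relating $w\cdot v_\lambda$ to $v_{w(\lambda)}$ do not affect $\lie h$-weights and hence leave the character unchanged.
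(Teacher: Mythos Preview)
Your argument is correct and is exactly the intended one: the paper states this corollary without proof, as an immediate consequence of the preceding basis theorem together with the conjugation \eqref{tri=dem}, and your weight computation $b_{\bs}\mapsto w(\lambda-\operatorname{wt}\bs)$ via $b_{\bs}=w\cdot(f^{\bs}.v_\lambda)$ is precisely the bookkeeping that makes this explicit. One minor remark: your closing comment about $e_{w(\alpha)}$ ``acting as lowering operators'' is a bit imprecise (these operators lie in $\lie n^+$ and raise, with root $-w(\alpha)$; the paper's notation $e_{w(\alpha)}$ is slightly loose here), but your primary route through conjugation avoids this entirely and is clean.
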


\section{Proofs}\label{sec-four}

\subsection{Spanning set}
We want to show that $V_A(\lambda)^a$ is spanned by the vectors
\[
 \{ \prod_{\alpha \in A} f_\alpha^{s_\alpha}.v\lambda \, : \, \mathbf{s} \in S_A(\lambda) \}
\]
Since, by definition
\[
 V_A(\lambda)^a = S(\lie n_A).v_\lambda,
\]
so we have to show that for any $\bt = (t_\alpha) \in (\bz_{\geq 0})^{\sharp A}$:
\[
f^{\bt}.v_\lambda \in \langle f^{\bs}.v_\lambda \; : \; \bs \in S_A(\lambda) \rangle_\bc.
\]
Recall the total order on $R^+$ from \cite{FFoL11a}:
$$
\alpha_{1,1} \prec \alpha_{1,2} \prec \ldots \prec \alpha_{1,n} \prec \alpha_{2,2} \prec \ldots \prec \alpha_{2,n} \prec \ldots \prec \alpha_{n-1, n-1} \prec \alpha_{n-1, n} \prec \alpha_{n,n}.
$$
We have an induced order on the root vectors $f_{\alpha_{i,j}}$ and denote $\prec$ the induced homogeneous lexicographic order on $S(\lie n^-)$. By restriction we obtain a total order $\prec$ on $S(\lie n_A)$. Then

\begin{lem}\label{straight}
 Let $\bt \in (\bz_{\geq 0})^{\sharp A}, \bt  \notin S_A(\lambda)$, then
\[
 f^{\mathbf{t}}.v_\lambda = \sum_{\mathbf{s} \prec \mathbf{t}} c_{\mathbf{s}} f^{\mathbf{s}}.v_\lambda,
\]
where $\bs \in (\bz_{\geq 0})^{\sharp A}$ and $c_{\mathbf{s}} \in \bc$.
\end{lem}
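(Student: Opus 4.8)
The plan is to reduce the statement to the already-known straightening law for $V(\lambda)^a$ from \cite{FFoL11a} and then to transport it into the subalgebra $S(\lie n_A)$. First I would recall that in \cite{FFoL11a} it is proved that $\{ f^{\bs}.v_\lambda \mid \bs \in S(\lambda) \}$ is a basis of $V(\lambda)^a$, and — more precisely — that for any $\bt \in (\bz_{\geq 0})^{\sharp R^+}$ with $\bt \notin S(\lambda)$ one has a straightening identity $f^{\bt}.v_\lambda = \sum_{\bs \prec \bt} c_{\bs} f^{\bs}.v_\lambda$ with $\bs$ of the same $\lie h$-weight as $\bt$. Now view our $\bt \in (\bz_{\geq 0})^{\sharp A}$ as a point $\overline{\bt} \in (\bz_{\geq 0})^{\sharp R^+}$ supported on $A$ via the embedding of Proposition~\ref{prop-embed}. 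If $\overline{\bt} \in S(\lambda)$, then since $\overline{\bt}$ is supported on $A$ it already lies in $S_A(\lambda)$ by Proposition~\ref{prop-embed}, contradicting $\bt \notin S_A(\lambda)$; hence $\overline{\bt} \notin S(\lambda)$ and the straightening law applies to $\overline{\bt}$ in $V(\lambda)^a$.

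The key step is then to argue that all monomials $f^{\bs}$ appearing on the right-hand side of the straightening of $\overline{\bt}$ are again supported on $A$, i.e. $s_\alpha = 0$ for all $\alpha \notin A$. Here is where triangularity of $A$ enters, via the explicit shape of the straightening relations in \cite{FFoL11a}: these relations are generated, under the $S(\lie n^+)$-action and the lexicographic order, by the Dyck-path relations $\sum_{\alpha \in \bp} s_\alpha \leq \lambda(h_{\beta_\bp})$, and rewriting a monomial that violates a Dyck-path inequality for $\bp$ produces only monomials whose support is contained in $\operatorname{supp}(\bt) \cup \operatorname{supp}(\bp)$ — in fact, in the convex/"triangular" hull of the roots already present. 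Using Remark~\ref{tri-paths} (for triangular $A$ every path of $A$ is a Dyck path) together with the description of the inequalities, I would show that the offending Dyck path $\bp$ witnessing $\overline{\bt}\notin S(\lambda)$ can be taken inside $A$ (restrict a violating path to $A$; triangularity guarantees it stays a Dyck path and the relevant inequality is still violated, cf. the argument in the proof of Lemma~1 on marked chain polytopes), and that the rewriting stays within $S(\lie n_A)$ because triangularity is exactly closure of $A$ under the operations $(\beta_1,\beta_2)\mapsto \gamma(\beta_1,\beta_2),\ \beta_1+\beta_2-\gamma(\beta_1,\beta_2)$ that appear when one commutes and straightens root vectors.

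Concretely I would proceed as follows: (1) pick a Dyck path $\bp\in\mD$ with $\sum_{\alpha\in\bp}t_\alpha > \lambda(h_{\beta_\bp})$; (2) replace $\bp$ by $\bq=\bp\cap A$, noting $\operatorname{supp}\bt\subseteq A$ forces the "missing" roots of $\bp$ to contribute $0$ to the sum, so $\sum_{\alpha\in\bq}t_\alpha=\sum_{\alpha\in\bp}t_\alpha > \lambda(h_{\beta_\bp}) \geq \lambda(h_{\beta_\bq})$, and $\bq\in\mD_A$ by Remark~\ref{tri-paths}; (3) apply the straightening relation attached to $\bq$ inside $S(\lie n_A)$ — this is legitimate because the corresponding relation in the ideal $I_{V(\lambda)}$ from \cite{FFoL11a} involves only root vectors $f_\alpha$ with $\alpha$ in the triangular closure of $\bq$, which equals a subset of $A$; (4) the output is a $\bc$-combination of $f^{\bs}.v_\lambda$ with $\bs\prec\bt$ and $\operatorname{supp}\bs\subseteq A$; (5) iterate on any $\bs$ that is still not in $S_A(\lambda)$ — the order $\prec$ is a well-order on the finite set of monomials of fixed weight below $\bt$, so the process terminates. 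The main obstacle I anticipate is step (3): justifying that the $V(\lambda)^a$-straightening relation indexed by a Dyck path $\bq\subseteq A$ can be rewritten using only root vectors indexed by $A$. This needs the precise form of the relations in \cite{FFoL11a} (they are, up to sign, of the shape $\sum f_{\beta_1}\cdots f_{\beta_{k}} $ over decompositions of $\beta_\bq$ refining $\bq$, all of whose parts are $\trianglelefteq$-below $\beta_\bq$ and $\trianglerighteq$-above its simple ends), combined with the closure condition defining triangularity; once that bookkeeping is in place the rest is formal.
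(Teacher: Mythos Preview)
Your plan is correct and coincides with the paper's approach: both arguments amount to running the \cite{FFoL11a} straightening attached to a violating Dyck path and checking that every monomial produced is supported on $A$. The paper carries out your step~(3) explicitly by reproducing the $e$-operator computation (apply $e_{j_{\ell}+1,j_t}$'s and then $e_{i_1,i_k-1}$'s to $f_{\beta_{\bq}}^{|\bq|}.v_\lambda=0$) and observing that every root vector appearing is some $f_{\alpha_{i_k,j_\ell}}$, which lies in $A$ exactly by the defining condition of a Dyck path for $A$; this is the ``bookkeeping'' you anticipate, and there is no softer shortcut.
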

\begin{proof}
We can restrict to the case where $\mathbf{t}$ is supported on a Dyck path $\mathbf{p} = (\alpha_{i_1, j_1} , \ldots \alpha_{i_s, j_s})$ only. By abuse of notation we can assume that exist $ i_1 < \ldots < i_s, j_1 < \ldots < j_t$ such that $\bp$ is a totally ordered subset :
\begin{eqnarray}\label{rootA}
\mathbf{p} \subset \{ \alpha_{i_k, j_\ell} \, | \,  1 \leq k \leq s, 1 \leq \ell \leq t , i_k \leq j_\ell \} \subset A
\end{eqnarray}
and $\beta_\bp = \alpha_{i_1, j_t}$. \\
Let $\mathbf{t} = (t_{i_k,j_\ell})_{1 \leq k \leq s, 1 \leq \ell \leq t}$. Then by assumption $t_{p,q} = 0$ if $\alpha_{p,q} \notin \mathbf {p}$ and further
$$|\bp| = \sum_{k,\ell} t_{i_k, j_\ell}  >  \lambda(h_{\beta_\mathbf{p}}).$$ 
This implies that
\[
f_{\beta_\bp}^{|\bp|}.v_\lambda  = 0 \text{ in } V_A(\lambda) \subset V(\lambda) \text{ and hence } f_{\beta_\bp}^{|\bp|}.v_\lambda  = 0 \text{ in } V_A(\lambda)^a.
\]
We set
\[
t_{\circ, j_{\ell}} = \sum_{k=1}^{s} t_{i_k, j_{\ell}} \; , \; t_{i_k, \circ} = \sum_{\ell=1}^{t} t_{i_k, j_{\ell}}
\]
and consider the following expression in $V_A(\lambda)^a$.
\[
e_{j_{t-1} + 1, j_t}^{t_{\circ, j_{t-1}}} \cdots e_{j_2 + 1, j_t}^{t_{\circ, j_2}}e_{j_1 + 1, j_t}^{t_{\circ, j_1}}     f_{\beta_\bp}^{|\bp|}.v_\lambda = 0.
\]
Expanding this gives
\begin{eqnarray}\label{one}
f_{i_1, j_1}^{t_{\circ, j_1}} f_{i_1, j_2}^{t_{\circ, j_2}} \cdots f_{i_1, j_{t-1}}^{t_{\circ, j_{t-1}}} f_{i_1, j_t}^{t_{\circ, j_t}}. v_\lambda = 0.
\end{eqnarray}
We apply 
\begin{eqnarray}\label{two}
e_{i_1, i_2-1}^{t_{i_2, \circ}}\cdots e_{i_1, i_{s-1}-1}^{t_{i_{s-1},\circ}} e_{i_1, i_s-1}^{t_{i,s, \circ}}
\end{eqnarray}
to  \eqref{one} and obtain
\[
 f^{\bt}.v_\lambda = \sum_{\bs} c_{\bs} f^{\bs}.v_\lambda.
\]
with $\bs \in (\bz_{\geq 0})^{\sharp R^+}$ and finitely many $c_\bs \neq 0$. First of all, we have to show that $c_{\bs} = 0$ if $s_\alpha \neq 0$ for some $\alpha \notin A$.  This is clear up to the monomial in \eqref{one}, since $A$ is triangular and hence \eqref{rootA} implies that
 $\{ \alpha_{i_1, j_1}, \ldots, \alpha_{i_1, j_t} \} \subset A$. \\
The operators $e_{i_1, i_k -1}$ in \eqref{two} act only on monomials of the form $f_{i_1, j_\ell}$ (since $i_1$ is less than all other indices). Since
\[
e_{i_1, i_k -1}f_{i_1, j_\ell} = f_{i_k, j_\ell} \text{ or }0 \text{ if } i_k > j_\ell,
\] 
we can again deduce from \eqref{rootA}  that $\alpha_{i_k, j_\ell} \in A$ for $i_k \leq j_\ell$. This implies
\[
c_{\bs} \neq 0 \Rightarrow \bs \text{ is in the hyperplane } (\mathbb{R}_{\geq 0})^{\sharp A}\subset (\mathbb{R}_{\geq 0})^{\sharp R^+}.
\]
Using the same considerations as in \cite{FFoL11a} (we can restrict the computations there to paths supported on $A$ only) implies
\[
\mathbf{t} \preceq \mathbf{s} \Rightarrow c_{\mathbf{s}} = 0.
\]
\end{proof}

\begin{prop}\label{spaset}
Let $\lambda \in P^+$, $A$ triangular, then the set
\[
\{ f^{\bs}.v_\lambda \; | \; \bs \in S_A(\lambda) \}
\]
is a spanning set for $V_A(\lambda)^a$.
\end{prop}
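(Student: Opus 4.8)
The plan is to obtain the Proposition from Lemma~\ref{straight} by a routine straightening induction with respect to the monomial order $\prec$. First I would recall that, by definition, $V_A(\lambda)^a = S(\lie n_A).v_\lambda$, and that $S(\lie n_A)$ has $\bc$-basis $\{ f^{\bt} \, : \, \bt \in (\bz_{\geq 0})^{\sharp A}\}$; hence $V_A(\lambda)^a$ is already spanned by the vectors $f^{\bt}.v_\lambda$ ranging over all $\bt \in (\bz_{\geq 0})^{\sharp A}$, and it is enough to show that each such vector lies in
\[
W_\lambda := \langle\, f^{\bs}.v_\lambda \, : \, \bs \in S_A(\lambda) \,\rangle_{\bc}.
\]

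Next I would fix the total degree $d := |\bt| = \sum_{\alpha \in A} t_\alpha$ and work inside the \emph{finite} set of monomials of $S(\lie n_A)$ of degree $d$, which is totally ordered by $\prec$. Since $V_A(\lambda)^a$ is graded by the $S(\lie n_A)$-degree and $f^{\bt}.v_\lambda$ is homogeneous of degree $d$, any identity produced by Lemma~\ref{straight} may be replaced by its homogeneous component of degree $d$, so that the monomials $f^{\bs}$ appearing are again of degree $d$; this is the point that guarantees the reduction terminates. I would then argue by induction on $\bt$ with respect to $\prec$ (restricted to degree-$d$ monomials). If $\bt \in S_A(\lambda)$ there is nothing to show. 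If $\bt \notin S_A(\lambda)$, Lemma~\ref{straight} gives $f^{\bt}.v_\lambda = \sum_{\bs \prec \bt} c_{\bs} f^{\bs}.v_\lambda$ with $\bs \in (\bz_{\geq 0})^{\sharp A}$, $|\bs| = d$ and $\bs \prec \bt$; by the induction hypothesis each $f^{\bs}.v_\lambda \in W_\lambda$, hence $f^{\bt}.v_\lambda \in W_\lambda$. The base case $\bt = \min_{\prec}$ is automatic: either it lies in $S_A(\lambda)$, or the sum from Lemma~\ref{straight} is empty, forcing $f^{\bt}.v_\lambda = 0$.

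I do not expect a genuine obstacle here: all the representation-theoretic substance — the vanishing $e^{\bullet} f_{\beta_{\bp}}^{|\bp|}.v_\lambda = 0$, the fact that the straightening stays supported on $A$ (using that $A$ is triangular), and the control of the leading term via the computation imported from \cite{FFoL11a} — is already packaged inside Lemma~\ref{straight}. The only thing requiring care in the present argument is the bookkeeping that the straightening relation rewrites $f^{\bt}.v_\lambda$ in terms of strictly $\prec$-smaller monomials \emph{of the same total degree}, which follows from the grading of $V_A(\lambda)^a$ together with the monomial-order property of $\prec$.
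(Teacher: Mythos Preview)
Your proposal is correct and follows essentially the same route as the paper: iterate Lemma~\ref{straight} to rewrite any $f^{\bt}.v_\lambda$ with $\bt \notin S_A(\lambda)$ as a combination of strictly $\prec$-smaller terms supported on $A$, until only $\bs \in S_A(\lambda)$ remain. The paper compresses this to ``after finitely many steps'', whereas you spell out the termination via the homogeneous grading; since $\prec$ is a \emph{homogeneous} lexicographic order, $\bs \prec \bt$ already forces $|\bs| \le |\bt|$, so the set $\{\bs \prec \bt\}$ is finite and your extra step of projecting onto the degree-$d$ component, while correct, is not strictly needed.
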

\begin{proof}
Let $\bt \in (\bz_{\geq 0})^{\sharp A} \setminus S_A(\lambda)$, then there exists $\bp \in \mD_A$ such that 
\[
\sum_{\alpha \in \bp} t_\alpha > \lambda(h_{\beta_{\bp}}).
\]
Using Lemma~\ref{straight} we know that $f^{\bt}.v_\lambda \in V_A(\lambda)$ can be rewritten as
\[
\sum_{\bs \prec t} c_{\bs} f^{\bs}.v_\lambda,
\]
with $\mathbf{s} \in (\mathbb{R}_{\geq 0})^{\sharp A}$. \begin{scriptsize}
\begin{footnotesize}
•
\end{footnotesize}
\end{scriptsize}After finitely many steps we obtain $\bs \in S_A(\lambda)$ for all $c_{\bs} \neq 0$.
\end{proof}


\subsection{Linear independence}
We have to show that
\[
 \{ f^{\bs}.v_\lambda \in V_A(\lambda)^a \, : \, \bs \in S_A(\lambda) \}
\]
is linear independent in $V_A(\lambda)^a$. Since $V_A(\lambda) \subseteq V(\lambda)$ and $V_A(\lambda)_s\subset V(\lambda)_s$ it is enough to show that 
\[
 \{ f^{\bs}.v_\lambda \in V(\lambda)^a \, : \, \bs \in S_A(\lambda) \}
\]
is linear independent. For this we use  Proposition~\ref{prop-embed}, $S_A(\lambda) \subset S(\lambda)$ in combination with 
\begin{thm*}[\cite{FFoL11a}] Let $\lambda \in P^+$, then
the set
\[
 \{ \prod_{\alpha \in R^+} f_\alpha^{s_\alpha}.v_\lambda \, : \, \mathbf{s} \in S(\lambda) \}
\]
is linear independent in $V(\lambda)^a$.
\end{thm*}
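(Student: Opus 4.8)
The plan is to prove the linear independence by induction on the height $\sum_i m_i$ of $\lambda=\sum_i m_i\omega_i\in P^+$; the spanning property is already at our disposal (Proposition~\ref{spaset} applied to $A=R^+$), so independence is the only thing missing for a basis. The two tools will be the Minkowski-sum identity $S(\lambda)+S(\mu)=S(\lambda+\mu)$ (the nontrivial ``$\subseteq$'' half of which is the combinatorial statement recalled just above Remark~\ref{ffl}) and the embedding of $V(\lambda+\mu)$ into $V(\lambda)\otimes V(\mu)$ as the Cartan component.

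\emph{Base case.} For $\lambda=\omega_k$ one has $V(\omega_k)\cong\Lambda^k(\bc^{n+1})$, with highest weight vector $e_1\wedge\dots\wedge e_k$, and $f_{\alpha_{i,j}}=E_{i+1,j}$ acts on the standard monomial basis of $\Lambda^k(\bc^{n+1})$ by relabelling one index. Since $\omega_k(h_{\alpha_{i,j}})\in\{0,1\}$, the inequalities defining $P(\omega_k)$ force the points of $S(\omega_k)$ to be very sparse $0/1$-vectors, and I would check by direct inspection of how the commuting operators $f_\alpha$ (viewed in $S(\lie n^-)$) act on wedges of unit vectors that the vectors $f^{\bs}.v_{\omega_k}$, $\bs\in S(\omega_k)$, are pairwise distinct basis wedges up to sign; linear independence in $V(\omega_k)$, hence a fortiori in $V(\omega_k)^a$, follows. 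Equivalently, the spanning statement being in hand, it suffices to verify the finite count $|S(\omega_k)|=\binom{n+1}{k}=\dim V(\omega_k)$.

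\emph{Inductive step.} Write $\lambda=\lambda'+\mu$ with both summands of smaller height and assume the claim for $\lambda'$ and $\mu$. The $\lie g$-equivariant inclusion $\iota\colon V(\lambda'+\mu)\hookrightarrow V(\lambda')\otimes V(\mu)$, $v_{\lambda'+\mu}\mapsto v_{\lambda'}\otimes v_\mu$, is filtered for the PBW filtration on the source and the product filtration on the target, whose associated graded is canonically $V(\lambda')^a\otimes V(\mu)^a$; passing to associated graded yields an $S(\lie n^-)$-module map $\pi\colon V(\lambda'+\mu)^a\to V(\lambda')^a\otimes V(\mu)^a$ with
\[
\pi\bigl(f^{\bs}.v_{\lambda'+\mu}\bigr)=\sum_{\bt^1+\bt^2=\bs}\binom{\bs}{\bt^1}\,\bigl(f^{\bt^1}.v_{\lambda'}\bigr)\otimes\bigl(f^{\bt^2}.v_\mu\bigr),
\]
the multinomial coefficients coming from the fact that each $f_\alpha$ is primitive in $S(\lie n^-)$. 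By the inductive hypothesis $\{(f^{\bu}.v_{\lambda'})\otimes(f^{\bv}.v_\mu): \bu\in S(\lambda'),\ \bv\in S(\mu)\}$ is a basis of $V(\lambda')^a\otimes V(\mu)^a$, so each $f^{\bt^i}.v$ on the right can be rewritten through it by the straightening of Lemma~\ref{straight}, which is triangular for the order $\prec$. Given a relation $\sum_{\bs\in S(\lambda)}c_{\bs}\,f^{\bs}.v_{\lambda}=0$, I would choose for each $\bs$, using $S(\lambda')+S(\mu)=S(\lambda)$, a decomposition $\bs=\sigma(\bs)+\tau(\bs)$ with $\sigma(\bs)\in S(\lambda')$, $\tau(\bs)\in S(\mu)$ and $\sigma(\bs)$ maximal for $\prec$; then apply $\pi$, expand in the tensor basis, isolate the $\prec$-largest occurring pair, deduce that the associated $c_{\bs}$ vanishes, and iterate downward in $\prec$.

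\emph{Main obstacle.} The heart of the matter is exactly the last book-keeping: controlling the cross-terms, i.e.\ ruling out that the leading pair $(f^{\sigma(\bs)}.v_{\lambda'})\otimes(f^{\tau(\bs)}.v_\mu)$ is cancelled either by contributions of other lattice points or by the straightening of intermediate monomials $f^{\bt^i}$ lying outside $S(\lambda')$, $S(\mu)$. This needs a monotonicity relating $\prec$ to the assignment $\bs\mapsto(\sigma(\bs),\tau(\bs))$, and it is precisely here that one must use the explicit Dyck-path inequalities cutting out $P(\lambda)$ together with their non-redundancy (Remark~\ref{ffl}): an ``interference'' between the leading decompositions of two distinct points of $S(\lambda)$ would force one of those inequalities to fail. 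A viable alternative, avoiding tensor products, is to induct on the rank $n$ via $\mathfrak{sl}_n\subset\mathfrak{sl}_{n+1}$ and the column structure of the triangle of positive roots, but the combinatorial core — triangularity of the relevant transition matrix — is of comparable difficulty.
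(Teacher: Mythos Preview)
The paper does not prove this theorem at all: it is quoted verbatim from \cite{FFoL11a} and used as a black box to deduce Corollary~\ref{linset}. There is therefore no ``paper's own proof'' to compare against; the only relevant passage is the remark immediately following Corollary~\ref{linset}, which sketches an alternative route via \cite{FFoL13a}.

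Your proposal is close in spirit to that remark --- reduce to fundamental weights via the Minkowski identity $S(\lambda')+S(\mu)=S(\lambda'+\mu)$ and the Cartan-component embedding --- but you carry it out at the level of the PBW-graded module rather than the finer filtration by the total monomial order. That difference is exactly where your acknowledged obstacle lives. Working only with the PBW grading, the map $\pi$ genuinely mixes many decompositions $\bt^1+\bt^2=\bs$, and after straightening the factors via Lemma~\ref{straight} you must rule out cancellation of the chosen leading pair $(\sigma(\bs),\tau(\bs))$ by contributions coming from \emph{other} $\bs'$; there is no a~priori compatibility between $\prec$ on $S(\lambda)$ and the product order on $S(\lambda')\times S(\mu)$ that makes this automatic. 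The \cite{FFoL13a} argument sidesteps this entirely: one refines to the one-dimensional filtration indexed by the monomial order, so that ``linearly independent'' becomes ``essential'', and then the inclusion $es(V(\lambda'))+es(V(\mu))\subset es(V(\lambda'+\mu))$ follows without any straightening or cross-term bookkeeping, because in the refined associated graded each surviving monomial is already a basis vector.

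In short: your outline is reasonable and the base case and the construction of $\pi$ are correct, but the step you flag as the ``main obstacle'' is a real gap, not a routine verification. Either supply the missing monotonicity lemma relating $\prec$ to the chosen decompositions (this is essentially what \cite{FFoL11a} does), or switch to the finer filtration as in the remark, which turns the inductive step into a one-line inclusion of essential sets.
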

So we can deduce immediately
\begin{cor}\label{linset}
 Let $\lambda \in P^+$ and $A \subset R^+$ triangular, then
the set
\[
 \{f^{\bs }.v_\lambda \, : \, \mathbf{s} \in S_A(\lambda) \}
\]
is linear independent in $V_A(\lambda)^a$.
\end{cor}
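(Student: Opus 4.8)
The plan is to reduce linear independence in $V_A(\lambda)^a$ to the already-established linear independence in $V(\lambda)^a$ from \cite{FFoL11a}, via the inclusion $\lie n_A \subseteq \lie n^-$ together with the embedding of Proposition~\ref{prop-embed}. This is essentially formal, so the proof will be short.

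First I would observe that the inclusion $\lie n_A \hookrightarrow \lie n^-$ respects the PBW filtrations, $U(\lie n_A)_s \subseteq U(\lie n^-)_s$ for all $s$, hence $V_A(\lambda)_s = U(\lie n_A)_s.v_\lambda \subseteq U(\lie n^-)_s.v_\lambda = V(\lambda)_s$; passing to associated graded spaces yields a degree-preserving linear map $\iota\colon V_A(\lambda)^a \to V(\lambda)^a$. For any $\bs \in (\bz_{\geq 0})^{\sharp A}$, the element $f^{\bs}.v_\lambda$ lies in filtration degree exactly $|\bs| = \sum_{\alpha \in A} s_\alpha$ inside $V(\lambda)$ (it is a product of $|\bs|$ elements of $\lie n^-$ applied to $v_\lambda$), so its class in $V_A(\lambda)^a$ is sent by $\iota$ to the class in $V(\lambda)^a$ of the same element, which is $f^{\overline{\bs}}.v_\lambda$ where $\overline{\bs} \in (\bz_{\geq 0})^{\sharp R^+}$ is the extension-by-zero of $\bs$ from Proposition~\ref{prop-embed} (here one uses that $f^{\bs}$ and $f^{\overline{\bs}}$ are literally the same monomial under $S(\lie n_A) \subseteq S(\lie n^-)$).

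Next I would invoke Proposition~\ref{prop-embed}: the map $\bs \mapsto \overline{\bs}$ is injective and sends $S_A(\lambda)$ into $S(\lambda)$. Hence $\{ f^{\overline{\bs}}.v_\lambda : \bs \in S_A(\lambda)\}$ is a subset of $\{ f^{\bt}.v_\lambda : \bt \in S(\lambda)\}$, which is linearly independent in $V(\lambda)^a$ by the quoted theorem of Feigin--Littelmann--Fourier. Therefore, if $\sum_{\bs \in S_A(\lambda)} c_{\bs}\, f^{\bs}.v_\lambda = 0$ in $V_A(\lambda)^a$, applying $\iota$ gives $\sum_{\bs \in S_A(\lambda)} c_{\bs}\, f^{\overline{\bs}}.v_\lambda = 0$ in $V(\lambda)^a$, forcing all $c_{\bs} = 0$. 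This gives the claim. The only point needing a little care --- and the closest thing to an obstacle --- is the compatibility in the previous paragraph, namely that $\iota$ is degree-preserving and carries the distinguished monomial vector to the distinguished monomial vector rather than to something of lower filtration degree; but this is immediate from the observation on the filtration degree of $f^{\bs}.v_\lambda$ and the matching of monomials under $S(\lie n_A) \subseteq S(\lie n^-)$. Everything else is routine.
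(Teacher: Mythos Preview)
Your proposal is correct and follows essentially the same approach as the paper: reduce to $V(\lambda)^a$ via the inclusion $V_A(\lambda)_s \subseteq V(\lambda)_s$, identify $S_A(\lambda)$ with a subset of $S(\lambda)$ via Proposition~\ref{prop-embed}, and invoke the linear independence result of \cite{FFoL11a}. The paper states these steps tersely, while you spell out the induced graded map $\iota$ and the monomial compatibility explicitly, but the argument is the same.
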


\begin{rem}
An alternative proof would be following \cite{FFoL13a}, introducing an even finer filtration to one-dimensional graded components and using that we already know that $S_A(\lambda) + S_A(\mu) = S_A(\lambda+ \mu)$. In \cite{FFoL13a} it was shown that if $S_A(\lambda)$ and $S_A(\mu)$ are both parametrizing linear independent subsets, than $S_A(\lambda + \mu)$ parametrizes a linear independent subset in the Cartan component of the tensor product. So we would be left to show that $S_A(\omega_i)$ parametrizes a linear independent subset for  all fundamental weights.
\end{rem}


\section{Degenerations}\label{sec-five}
In this section, we will see how our results combined with the results in \cite{FFoL13a} give a flat degeneration of Schubert varieties to PBW degenerated varieties and further to toric varieties. For this let us recall the notion of a favourable module, introduced in \cite{FFoL13a}. Let $\mathbb{U}$ be a complex algebraic unipotent group acting on a cyclic finite-dimensional complex vector space $M$, let $\lie n$ be the corresponding nilpotent Lie algebra, then $M = U(\lie n).m$ for some generator $m$. Let $\{f_1, \ldots, f_N \}$ be an ordered basis of $\lie n$ and fix an induced homogeneous ordering of the monomials in $U(\lie n)$. Thus we obtain a filtration of $M$ by
\[
M^{\bs} = \langle f^\bt.m \; | \; \bt \leq \bs \rangle_\bc,
\]
where in the associated graded module, $M^t$, all graded components are at most one-dimensional. Of course, $M^t$ is not a $\lie n$-module, but a $\lie n_a$-module, the abelianized version of $\lie n$. $f^{\bs}$ is called an essential monomial if $M^{\bs}/M^{ < \bs }$ is one-dimensional and we denote the finite set of the exponents of essential monomial $es(M) \subset (\bz_{\geq 0})^{N}$.
\begin{defn}
$M$ is called favourable $\mathbb{U}$-module if the following two conditions are satisfied. There exists a convex polytope $P(M) \subset (\mathbb{R}_{\geq 0})^{N}$ such that the lattice points $S(M)$ coincide with $es(M)$. For all $n \geq 1$:
\[
\dim U(\lie n).(m^{\otimes n}) = |n S(M)|,
\]
the dimension of the Cartan component in the $n$-times tensor product equals the $n$-times Minkowski sum of the lattice points in $S(M)$.
\end{defn}
Following again \cite{FFoL13a}, we denote the flag varieties associated to $M, M^a, M^t $: 
\[
\mathcal{F}_{\lie n_a}(M^a), \mathcal{F}_{\lie n_a}(M^t), \mathcal{F}_{\lie n}(M).
\]
Then it has been proved in \cite{FFoL13a} that if $M$ is a favourable $\mathbb{U}$-module, then $\mathcal{F}_{\lie n_a}(M^t)$ is a toric variety and there exist flat degenerations 
\[
\mathcal{F}_{\lie n}(M) 	\rightsquigarrow \mathcal{F}_{\lie n_a}(M^a)\rightsquigarrow \mathcal{F}_{\lie n_a}( M^t).\]
The projective flag varieties $ \mathcal{F}_{\lie n_a}(M^a), \mathcal{F}_{\lie n_a}( M^t)$ are projectively normal and arithmetically Cohen-Macaulay varieties.

\subsection{\texorpdfstring{$V_A(\lambda)$ is favourable}{Favourable in our context}}
Let $\mathbb{U} \subset SL_{n+1}$ be a complex algebraic unipotent subgroup of the lower triangular matrices, such that the corresponding Lie algebra is $\lie n_A$ for some triangular $A \subset R^+$. Further denote $B$ the standard Borel subgroup.
\begin{lem} $V_A(\lambda)$ is a favourable $\mathbb{U}$-module. Further if $w \in W$ is triangular, then $V_w(\lambda)$ is a favourable $B$-module.
\end{lem}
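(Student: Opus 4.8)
The plan is to verify directly the two defining conditions of a favourable module, taking $P(V_A(\lambda)) := P_A(\lambda)$ and transporting everything from the results established in Sections~\ref{sec-three} and~\ref{sec-four}. Fix $\mathbb{U}$ as above, so its Lie algebra is $\lie n_A$; equip $\lie n_A$ with the ordered basis $\{ f_\alpha \, | \, \alpha \in A \}$ ordered by the restriction of the total order $\prec$ of Section~\ref{sec-four}, and let the induced homogeneous lexicographic order on monomials $f^{\bs} \in S(\lie n_A)$ define the fine filtration $V_A(\lambda)^{\bs} = \langle f^{\bt}.v_\lambda \, | \, \bt \preceq \bs \rangle_\bc$. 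Since $P_A(\lambda) \subset (\mathbb{R}_{\geq 0})^{\sharp A}$ is a convex lattice polytope by Lemma~\ref{mink-sum}, it remains to see (i) $S_A(\lambda) = es(V_A(\lambda))$ and (ii) $\dim U(\lie n_A).(v_\lambda^{\otimes n}) = |n\,S_A(\lambda)|$ for all $n \geq 1$.

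For (i), note first that $|es(V_A(\lambda))| = \dim V_A(\lambda)$, because the associated graded of the fine filtration has the images of the $f^{\bs}.v_\lambda$ with $\bs \in es(V_A(\lambda))$ as a basis; and $\dim V_A(\lambda) = \dim V_A(\lambda)^a = |S_A(\lambda)|$ by Theorem~\ref{main-thm}. Conversely, if $\bt \in (\bz_{\geq 0})^{\sharp A} \setminus S_A(\lambda)$, then Lemma~\ref{straight} (iterated as in the proof of Proposition~\ref{spaset}) expresses $f^{\bt}.v_\lambda$ as a $\bc$-linear combination of $f^{\bs}.v_\lambda$ with $\bs \prec \bt$, so $f^{\bt}.v_\lambda \in V_A(\lambda)^{<\bt}$ and $\bt \notin es(V_A(\lambda))$. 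Hence $es(V_A(\lambda)) \subseteq S_A(\lambda)$, and comparing cardinalities yields $es(V_A(\lambda)) = S_A(\lambda)$, which is exactly condition (i).

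For (ii), I would identify the relevant module: $v_\lambda^{\otimes n} \in V(\lambda)^{\otimes n}$ is a highest weight vector of weight $n\lambda$, so it generates the Cartan component, and the $\lie g$-module isomorphism $V(n\lambda) \cong U(\lie n^-).v_\lambda^{\otimes n}$ sending $v_{n\lambda} \mapsto v_\lambda^{\otimes n}$ restricts to $V_A(n\lambda) = U(\lie n_A).v_{n\lambda} \cong U(\lie n_A).v_\lambda^{\otimes n}$. Therefore $\dim U(\lie n_A).(v_\lambda^{\otimes n}) = \dim V_A(n\lambda) = |S_A(n\lambda)|$ by Theorem~\ref{main-thm}, and iterating the Minkowski-sum identity of Lemma~\ref{mink-sum} gives $S_A(n\lambda) = S_A(\lambda) + \cdots + S_A(\lambda) = n\,S_A(\lambda)$ (normality of $P_A(\lambda)$, also from Lemma~\ref{mink-sum}, additionally identifies this with the lattice points of $n P_A(\lambda)$). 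This is condition (ii), so $V_A(\lambda)$ is favourable. For the second assertion, put $A = w^{-1}(R^-)\cap R^+$, triangular by Proposition~\ref{tritri}; by \eqref{tri=dem} the $W$-action by $w$ gives a linear isomorphism $V_A(\lambda) \to V_w(\lambda)$ sending $v_\lambda \mapsto v_{w(\lambda)}$ and intertwining the $\lie n_A$-action with that of $w\lie n_A w^{-1} = \langle w f_\alpha w^{-1} \, | \, \alpha \in A \rangle_\bc \subseteq \lie n^+$. Thus $\mathbb{U}' := w\mathbb{U}w^{-1}$ is a unipotent subgroup of $B$ with $V_w(\lambda) = U(\operatorname{Lie}\mathbb{U}').v_{w(\lambda)}$, and transporting the ordered basis, the polytope $P_A(\lambda)$, its lattice points, the essential monomials and the tensor-power dimensions along this isomorphism shows that $V_w(\lambda)$ is a favourable $B$-module.

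The step I expect to be the main obstacle is (i), and more precisely the inclusion $S_A(\lambda) \subseteq es(V_A(\lambda))$: the straightening lemma only directly shows that no point outside $S_A(\lambda)$ is essential, so that every point of $S_A(\lambda)$ \emph{is} essential must be obtained indirectly from the dimension count of Theorem~\ref{main-thm}, and one has to make sure the fine filtration defining essential monomials is taken with respect to exactly the homogeneous lexicographic order used in Lemma~\ref{straight}. Given this, the rest is bookkeeping: the tensor-power condition reduces via the Cartan component to the Minkowski-sum statement of Lemma~\ref{mink-sum}, and the Demazure case reduces to the $\lie n_A$ case by the conjugation \eqref{tri=dem}.
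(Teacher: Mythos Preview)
Your argument is correct, and condition (ii) and the Demazure reduction via \eqref{tri=dem} match the paper essentially verbatim. The genuine difference is in condition~(i). The paper does \emph{not} use the order $\prec$ of Section~\ref{sec-four}; it introduces a new total order on $R^+$ (the one used in \cite{FFoL13a}, with the induced homogeneous \emph{reverse} lexicographic order on monomials) and invokes the known identity $es(V(\lambda)) = S(\lambda)$ for the ambient simple module. From this it deduces the inclusion $S_A(\lambda) \subseteq es(V_A(\lambda))$: if $\bs \in S_A(\lambda) \subset S(\lambda) = es(V(\lambda))$ then $f^{\bs}.v_\lambda \notin \langle f^{\bt}.v_\lambda \, | \, \bt \prec \bs \rangle_\bc$ already in $V(\lambda)$, hence a fortiori in the smaller $V_A(\lambda)$. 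Equality then follows by comparing cardinalities. You instead keep the order $\prec$ of Section~\ref{sec-four}, stay entirely inside $V_A(\lambda)$, and prove the \emph{opposite} inclusion $es(V_A(\lambda)) \subseteq S_A(\lambda)$ directly from Lemma~\ref{straight}: any $\bt \notin S_A(\lambda)$ is non-essential because $f^{\bt}.v_\lambda$ rewrites in strictly smaller monomials (passing from $V_A(\lambda)^a$ to $V_A(\lambda)$ is harmless since the order is homogeneous, so the lower PBW-degree error terms are again $\prec \bt$). Equality again follows from the dimension count via Theorem~\ref{main-thm}. Your route is more self-contained---it avoids importing the external result $es(V(\lambda)) = S(\lambda)$ and the second ordering---while the paper's route makes the compatibility with the ambient module $V(\lambda)$ explicit. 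Either way the same polytope $P_A(\lambda)$ witnesses favourability.
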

\begin{proof}
Let us fix an ordering on $R^+$:
\[
\alpha_{1,n} \succ \alpha_{1,n-1} \succ \alpha_{2,n} \succ \alpha_{1,n-2} \succ \alpha_{2,n-1} \succ \alpha_{3,n} \succ \ldots \succ  \alpha_1 \succ \ldots \succ \alpha_n.
\]
Note that this is different than all our previous orderings. By restriction we obtain an induced ordering on $A$.\\
It was been shown in \cite{FFoL13a}, that by choosing the ordering $\succ$ and the induced homogeneous reverse lexicographic order on monomials in $S(\lie n)$:
\[
S(\lambda) = es(V(\lambda)).
\]
Let $\bs \in (\bz_{\geq 0})^{\sharp R^+}$ such that $s_\alpha = 0$ for $\alpha \notin A$, and suppose $\bs$ is essential for $V(\lambda)$. Then 
\[
f^{\bs}.m \notin \langle f^\bt.m \, | \, \bt \prec \bs, \bt \in (\bz_{\geq 0})^{\sharp R^+} \rangle_\bc,
\]
so especially $f^{\bs}.m \notin \langle f^\bt.m \, | \, \bt \prec \bs, \bt \in (\bz_{\geq 0})^{\sharp A^+} \rangle_\bc$  (here we embed $(\bz_{\geq 0})^{\sharp A}$ trivially into $(\bz_{\geq 0})^{\sharp R^+}$). Hence $\bs$ is essential for $V_A(\lambda)$.\\
We have by Proposition~\ref{prop-embed} $S(\lambda) \cap (\bz_{\geq 0})^{\sharp A} = S_A(\lambda)$. This implies
\[
S_A(\lambda) \subset es(V_A(\lambda))
\]
but for dimension reason both set have the same cardinality and hence $S_A(\lambda) = es(V_A(\lambda))$.\\
We have further $V(\lambda + \mu) \hookrightarrow V(\lambda) \otimes V(\mu)$ as the Cartan component, hence $V_A(\lambda + \mu) \hookrightarrow V_A(\lambda) \otimes V_A(\mu)$. So 
\[
\dim U(\lie n_A^-).v_\lambda \otimes v_\mu = |S_A(\lambda + \mu)| = |S_A(\lambda) + S_A(\mu)|
\]
for all $\lambda, \mu \in P^+$, where the last equation follows from Lemma~\ref{mink-sum} . The proof is completed by considering the special case $\mu = \lambda$.
\end{proof}

\bibliographystyle{alpha}
\bibliography{pbw-demazure-bib}
\end{document}